\pgfplotsset{compat=newest}
\newtheorem{theorem}{Theorem}[section]
\newtheorem{proposition}[theorem]{Proposition}
\newtheorem{lemma}[theorem]{Lemma}
\theoremstyle{definition}
\theoremstyle{remark}
\newtheorem{remark}[theorem]{Remark}
\numberwithin{equation}{section}
\newcommand{\dd}{\, \mathrm{d}}
\renewcommand{\epsilon}{\varepsilon}
\newcommand{\id}{\mathbb{I}}
\newcommand{\Lc}{\mathcal{L}}
\newcommand{\N}{\mathbb{N}}
\newcommand{\norm}[2][]{{\left\|#2\right\|}} 
\renewcommand{\phi}{\varphi}
\newcommand{\R}{\mathbb{R}}
\newcommand{\Z}{\mathbb{Z}}
\DeclareMathOperator{\Tr}{Tr}
\newcommand{\abs}[2][]{{\left\vert#2\right\vert}}
\begin{document}
	
	\title[On the asymptotics of low--lying states in the Stark effect
	]
	{On the asymptotic number of low--lying states in the two--dimensional confined Stark effect}
    \author[Larry Read]{Larry Read} 
	\address[Larry Read]{Mathematisches Institut, Ludwig-Maximilans Universit\"at M\"unchen, Theresienstr. 39, 80333 M\"unchen, Germany}
	\email{read@math.lmu.de}
	\subjclass[2010]{Primary: 35P20; Secondary: 81Q10}

    \begin{abstract}
        We investigate the Stark operator restricted to a bounded domain $\Omega\subset\R^2$ with Dirichlet boundary conditions. In the semiclassical limit, a three-term asymptotic expansion for its individual eigenvalues has been established, with coefficients dependent on the curvature of $\Omega$. We analyse the accumulation of eigenvalues beneath the leading-order terms in these expansions, establishing Weyl-type asymptotics. Furthermore, we derive weak asymptotics for the density of the spectral projector onto these low-lying states.
\end{abstract}

\maketitle
\section{Introduction}
Consider the operator \begin{equation}\label{eqn:opLh} 
    \Lc_h=-h^2\Delta+x_1 \text{ in }L^2(\Omega) 
\end{equation} 
with Dirichlet boundary conditions, where $\Omega\subset \R^2$ is an open, bounded and connected region. Suppose that there is a unique point $X_0=(x_0,y_0)\in \partial\Omega$ that minimises the first coordinate, and that around $X_0$ the boundary is smooth with positive curvature at $X_0$. Then, as the semiclassical parameter $h$ tends to zero, the bound states of \eqref{eqn:opLh} cluster near the boundary at $X_0$, where the repulsive potential is smallest. The confined Stark effect is characterised by the splitting of the energy levels in this limit.

In \cite{cornean_two-dimensional_2022} Cornean, Krej\v{c}i\v{r}\'{i}k
, Pedersen, Raymond and Stockmeyer examined this splitting. They determined that for any fixed $k\geq 1$ the $k$th eigenvalue of \eqref{eqn:opLh} satisfies
\begin{equation}\label{eqn:eigencornean}
    \lambda_k(\mathcal{L}_h)=x_{0}+z_1 h^{2/3}+(2k-1)\sqrt{\frac{\kappa_0}{2}}h+\mathcal{O}_{h\rightarrow 0_+}(h^{4/3})
\end{equation}
where $-z_1\approx- 2.338$ is the first zero of the the Airy function and $\kappa_0>0$ is the curvature of the boundary of $\Omega$ at $X_0$. We note that a higher-dimensional analogue of this expansion has been found in \cite{fahs}.

The idea behind the expansion \eqref{eqn:eigencornean} is that as the bound states of \eqref{eqn:opLh} become concentrated near $X_0$, the curvature in the boundary acts as an effective harmonic oscillator in the tangential component, whilst the orthogonal component produces the Airy zero. Indeed, the approach taken in \cite{cornean_two-dimensional_2022} was to construct quasi-states using the eigenfunctions of the Airy operator and harmonic oscillator in tubular coordinates around $X_0$ on a suitable scale in $h$. 

In this work, we are concerned with the directly related question of how many eigenvalues accumulate below the different levels in \eqref{eqn:eigencornean}. To introduce this, first consider the general case of a Schr\"odinger operator $-h^2\Delta+V$ in $L^2(\omega)$ with Dirichlet conditions, on an open set $\omega\subset\R^d$ with suitably regular potential $V$. The well-known Weyl's law states that the counting function, 
\begin{equation*}
    N(-h^2\Delta+V,\Lambda)=\#\left\{k\in \N \colon \lambda_k(-h^2\Delta+V)<\Lambda\right\},
\end{equation*}
satisfies the asymptotics 
\begin{align}\label{eqn:weylscounting}
    \lim_{h\rightarrow 0_+}h^d N(-h^2\Delta+V,\Lambda)=L_{0,d}^{\mathrm{cl}}\int_{\omega}\left(\Lambda-V(x)\right)_{+}^{d/2}\dd x
\end{align} 
where $a_\pm=(\abs{a}\pm a)/2$ and  $L_{0,d}^{\mathrm{cl}}$ is an instance of 
\begin{equation}\label{eqn:semiclassicalconst}
        L_{\gamma,d}^{\mathrm{cl}}=\frac{\Gamma(\gamma+1)}{(4\pi)^{d/2}\Gamma(\gamma+1+d/2)}.
\end{equation}
We investigate this limit for the Stark operator \eqref{eqn:opLh}, where for the counting function we take $\Lambda$ as a function of $h$, choosing either 
\begin{equation*}
         x_0+\mu h^{2/3} \text{ or }\\
         x_0+z_1h^{2/3}+\mu h^{\alpha},
\end{equation*}
with $\mu\geq 0$ and $\alpha\in (2/3,1)$. In doing so, we count the number of low-lying eigenvalues, corresponding to the expansion \eqref{eqn:eigencornean}. We note that a similar regime was considered by Frank in \cite{frank_asymptotic_2007}, who determined the asymptotic number of edge states for a magnetic Laplacian with Neumann boundary conditions.

\subsection{Main results} 
    
    Before stating our main results, we establish the construction of tubular coordinates about $X_0$. For the latter, we follow \cite{cornean_two-dimensional_2022} and its notation as closely as possible. Without loss of generality, we will assume that $x_0=0$.
    
    Consider an arc-length parameterisation $\gamma(s)$ of $\partial\Omega$ in the vicinity of $X_0$, such that $\gamma(0)=X_0$. The outward normal at any point on this curve can be represented by $n(s)=(\cos(\theta(s)),\sin(\theta(s)))$ and the curvature by $\kappa(s)=\theta^\prime(s)$, with $\kappa_0\coloneqq \kappa(0)$. We can then fix $\delta>0$ to be sufficiently small so that the mapping $\tau\colon (-\delta,\delta)\times (0,\delta)\rightarrow \Omega$ defined by 
    \begin{equation}\label{eqn:tubular}
    \tau(s,t)=\gamma(s)-t n(s)
    \end{equation}
    establishes a diffeomorphism between the strip and its image in $\Omega$, where the determinant of its Jacobian is given by $1-\kappa(s)t$.

    Our first result reveals Weyl-type asymptotics for the $\gamma-$Riesz means of the Stark operator \eqref{eqn:opLh}, $\Tr\left(\Lc_h-\Lambda\right)_{-}^\gamma$, for which we identify $\gamma=0$ with the counting function. 
    \begin{theorem}\label{thm:maintrace}
        Let $\gamma \geq 0$, $\alpha\in (2/3,1)$ and $\mu\geq 0$, then
        \begin{align*}
            \lim_{h\rightarrow 0_+}h^{(1-2\gamma)/3}\Tr\left(\Lc_h-\mu h^{2/3}\right)_{-}^\gamma&=\frac{4\pi L_{\gamma,2}^{\mathrm{cl}}}{\sqrt{2\kappa_0}}\sum_{k=1}\left(\mu-z_k\right)_+^{\gamma+1},\text{ and }\\
            \lim_{h\rightarrow 0_+}h^{1-\alpha(1+\gamma)}\Tr\left(\Lc_h-z_1h^{2/3}-\mu h^{\alpha}\right)_{-}^\gamma &=\frac{4\pi L_{\gamma,2}^{\mathrm{cl}}}{\sqrt{2\kappa_0}}\mu^{\gamma+1}
        \end{align*}
        where $-z_k$ is the $k$th zero of the Airy function and $L^{\mathrm{cl}}_{\gamma,2}$ is given by \eqref{eqn:semiclassicalconst}.
    \end{theorem}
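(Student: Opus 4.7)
The plan is to localize $\mathcal{L}_h$ near $X_0$, rescale to separate an Airy mode in the normal direction from a semiclassical harmonic oscillator in the tangential direction, and read off the Riesz means of the resulting product model by recognising a Riemann sum in the tangential quantum number.

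\textbf{Reduction to a model.} By Dirichlet--Neumann bracketing I split $\Omega$ into a tubular neighborhood $\mathcal{N}_\delta=\tau((-\delta,\delta)\times(0,\delta))$ and its complement. Off $\mathcal{N}_\delta$ one has $x_1\geq c_\delta>0$, while both admissible thresholds $\Lambda$ tend to $0$, so Weyl's law \eqref{eqn:weylscounting} shows that the complement contributes $O(h^\infty)$ to the Riesz mean. On $\mathcal{N}_\delta$, I pass to the coordinates \eqref{eqn:tubular}, conjugate away the weight $1-\kappa(s)t$, and Taylor-expand
\[
    x_1(s,t)=t+\tfrac{\kappa_0}{2}s^2+O(s^3)+O(t s^2).
\]
After rescaling $s=h^{1/2}S$, $t=h^{2/3}T$, the operator equals $h^{2/3}\widehat{\mathcal{L}}_h$ with
\[
    \widehat{\mathcal{L}}_h=\bigl(-\partial_T^2+T\bigr)+h^{1/3}\bigl(-\partial_S^2+\tfrac{\kappa_0}{2}S^2\bigr)+\mathcal{R}_h,
\]
the remainder $\mathcal{R}_h$ being $o(h^{1/3})$ on states localised to $|S|+T\lesssim h^{-\eta}$ for $\eta>0$ small enough.

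\textbf{Riesz means of the model.} The separated operator $\mathcal{M}=(-\partial_T^2+T)\otimes I+h^{1/3}\, I\otimes(-\partial_S^2+\tfrac{\kappa_0}{2}S^2)$ on $L^2(\R_+\times\R)$ has spectrum $\{z_k+h^{1/3}(2n-1)\sqrt{\kappa_0/2}\}_{k,n\geq 1}$, so
\[
    h^{1/3}\,\Tr(\mathcal{M}-\mu)_-^\gamma=h^{1/3}\sum_{k,n\geq 1}\bigl(\mu-z_k-h^{1/3}(2n-1)\sqrt{\tfrac{\kappa_0}{2}}\bigr)_+^\gamma.
\]
The $n$-sum is a Riemann sum of spacing $2h^{1/3}\sqrt{\kappa_0/2}$, whose limit is $(\gamma+1)^{-1}(2\kappa_0)^{-1/2}\sum_k(\mu-z_k)_+^{\gamma+1}$; using $4\pi L^{\mathrm{cl}}_{\gamma,2}=1/(\gamma+1)$ matches the claimed coefficient, and combined with $\Tr(h^{2/3}\mathcal{M}-\mu h^{2/3})_-^\gamma=h^{2\gamma/3}\Tr(\mathcal{M}-\mu)_-^\gamma$ this yields the first asymptotic. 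The second asymptotic follows identically: $\Lambda=z_1h^{2/3}+\mu h^\alpha$ isolates $k=1$ because $z_k-z_1$ is bounded below for $k\geq 2$, and the remaining $n$-sum is a Riemann sum of spacing $2h^{1-\alpha}\sqrt{\kappa_0/2}$ giving $\mu^{\gamma+1}/((\gamma+1)\sqrt{2\kappa_0})$.

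\textbf{Two-sided bounds.} For the upper bound I feed the min--max principle tensor trial states built from Airy functions in $T$ and Hermite functions in $S$, truncated at scale $h^{-\eta}$; their quadratic forms sit below those of the model plus a negligible error. For the lower bound I apply the elementary inequality $\pm(AB+BA)\leq\eps A^2+\eps^{-1}B^2$ to absorb each mixed and curvature term of $\mathcal{R}_h$ into $\eps\mathcal{M}+o(h^{1/3})$, and invoke an Agmon-type decay estimate along the lines of \cite{cornean_two-dimensional_2022} to discard the contribution from $|S|+T\gtrsim h^{-\eta}$.

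The principal obstacle is that the number of tangential modes below the threshold grows as $h^{-1/3}$ in the first regime (respectively $h^{-(1-\alpha)}$ in the second), while the tangential level spacing in $\mathcal{L}_h$ is only of order $h$. Consequently $\mathcal{R}_h$ must be controlled to accuracy $o(h)$ \emph{uniformly} across a spectral window containing all these modes, not merely for a fixed Airy level $k$. This forces a careful joint choice of the localisation scale $\eta$ and of the Agmon weight --- quantitative enough that the remainder estimate and the exponential decay bound both survive uniformly over the many-mode regime --- and is the most delicate point in the argument.
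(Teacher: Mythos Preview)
Your strategy differs from the paper's in one essential respect: you rescale the tangential coordinate by $h^{1/2}$ so as to produce an explicit harmonic oscillator in $S$ and then sum over its levels via a Riemann sum, whereas the paper rescales by $h^{1/3}$, views the tangential direction as a one-dimensional semiclassical Schr\"odinger operator $-h^{2/3}\partial_{s}^{2}+\tfrac{\kappa_0}{2}s^{2}+\lambda_k$ on a \emph{fixed bounded} interval (using the confinement $\tfrac{\kappa_0}{2}s^2$ to restrict to $|s|\le \widetilde R$), and then invokes the standard one-dimensional Weyl law for Riesz means as a black box. In the paper's scaling the Taylor remainder becomes an $L^\infty$-perturbation of the potential of size $h^{1/3-5\eta}=o(1)$ on that bounded interval, so the Weyl integral with the perturbed potential converges to the correct limit with no further work; the many-mode issue you flag is absorbed into the phase-space volume computation and never arises.

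By contrast, in your scaling the highest relevant harmonic-oscillator modes (up to $n\sim h^{-1/3}$) satisfy $\langle S^{2}\rangle\sim h^{-1/3}$, so the cubic remainder $h^{5/6}S^{3}$ evaluated on these states is of order $h^{1/3}$ --- the same as the tangential level spacing, not $o(h^{1/3})$. Equivalently: localising to $|S|\lesssim h^{-\eta}$ with $\eta<1/6$ makes $\mathcal{R}_h=o(h^{1/3})$ but misses the top modes, while $\eta\ge 1/6$ captures them but loses the remainder bound. You correctly identify this as the principal obstacle, but the proposal does not resolve it: the inequality $\pm(AB+BA)\le\eps A^{2}+\eps^{-1}B^{2}$ does not apply to a pure multiplication term like $S^{3}$, and no alternative mechanism is supplied. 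The Riemann-sum computation of the model trace is correct, and your scheme can probably be rescued, but as written there is a genuine gap at exactly the point you call most delicate; the paper's route through the one-dimensional Weyl law is both simpler and sidesteps the difficulty entirely.
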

    The approach we employ involves the use of Dirichlet--Neumann bracketing and a rescaling of tubular coordinates around $X_0$. The former operation is carried out exclusively in the parallel coordinate, whilst in the orthogonal component we construct quasi-states, as in \cite{cornean_two-dimensional_2022}. These quasi-states approximate the eigenfunctions of the operator 
    \begin{equation*}
        -\frac{\mathrm{d}^2}{\mathrm{d}t^2}+t \text{ in } L^2(\R_+)
    \end{equation*}
    with Dirichlet boundary conditions, $\R_+=(0,\infty)$,  which arises from the Taylor series of $\tau$ near $X_0$. The eigenvalues of this operator are the absolute values of the zeros of the Airy function, $z_k$. We denote the corresponding normalised eigenfunctions by $a_k$, which are given by 
    \begin{equation}\label{eqn:airnorm}
         \mathrm{a}_k(t)=\frac{\mathrm{Ai}(t-z_k)}{\norm{\mathrm{Ai}(\cdot-z_k)}_{2}^2} \text{ for } \ k \geq 1,
     \end{equation}
    where $\mathrm{Ai}$ denotes the Airy function.
    
    Now, let $\rho_h(\cdot;\Lambda)$ denote the density of $(\mathcal{L}_h-\Lambda)^0_{-}$. Recall that for a trace class operator $T$ represented by its Schmidt decomposition $T=\sum_{k=1}s_k(\cdot,\phi_k)\phi_k$ the density is defined as $\rho_T=\sum_{k=1}s_k\abs{\phi_k}^2$. Then, in our second result we find weak-type asymptotics for $\rho_h$ in tubular coordinates rescaled in $\R^2_+\coloneqq\{(s,t)\in \R^2\colon t> 0\}$.
    \begin{theorem}\label{thm:maindensity}
        Let $\alpha\in (2/3,1)$ and $\mu\geq 0$, then as $h\rightarrow 0_+$
        \begin{align*}
            h^{4/3}\rho_h(\tau(h^{1/3}s,h^{2/3}t);\mu h^{2/3})&\rightharpoonup \frac{1}{\pi}\sum_{k=1}\left(\mu-\frac{\kappa_0}{2}s^2-z_k\right)_+^{1/2}\mathrm{a}_k(t)^2, \text{ and }\\h^{5/3-\alpha/2}\rho_h(\tau(h^{\alpha/2}s,h^{2/3}t);z_1 h^{2/3}+\mu h^\alpha)&\rightharpoonup \frac{1}{\pi}\left(\mu-\frac{\kappa_0}{2}s^2\right)_{+}^{1/2}\mathrm{a}_1(t)^2
        \end{align*}
        in the sense of distributions on $\R^2_+$, where $\tau$ is given by \eqref{eqn:tubular}.
    \end{theorem}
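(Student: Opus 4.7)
The plan is to deduce Theorem~\ref{thm:maindensity} from a perturbative extension of Theorem~\ref{thm:maintrace} via a standard convex-analytic argument. I describe the first case; the second is analogous.

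Fix $V \in C_c^\infty(\R^2_+)$, set $\Lambda_h = \mu h^{2/3}$, and let $P_h = (\Lc_h - \Lambda_h)^0_-$. Define the rescaled multiplication operator on $\Omega$ by $V_h(\tau(s_0,t_0)) := V(s_0/h^{1/3}, t_0/h^{2/3})$, extended by zero outside a tubular neighbourhood of $X_0$. The substitution $s_0 = h^{1/3}s$, $t_0 = h^{2/3}t$, combined with the pointwise convergence $1 - \kappa(h^{1/3}s)h^{2/3}t \to 1$ of the Jacobian uniformly on $\supp V$, reduces the first claim of Theorem~\ref{thm:maindensity} to
\begin{equation*}
\lim_{h \to 0_+} h^{1/3}\int_\Omega V_h\, \rho_h(\cdot;\Lambda_h)\,\dd x = \int_{\R^2_+} V F_\mu\,\dd s\,\dd t,
\end{equation*}
where $F_\mu$ denotes the right-hand side of Theorem~\ref{thm:maindensity}. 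Since $\int V_h\rho_h\,\dd x = \Tr(V_h P_h)$, the Hellmann--Feynman formula yields for generic $h$
\begin{equation*}
h^{2/3}\int V_h\rho_h\,\dd x = -\frac{d}{d\eps}\Big|_{\eps = 0}\Tr(\Lc_h + \eps h^{2/3} V_h - \Lambda_h)^1_-.
\end{equation*}

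The key analytic step is to extend Theorem~\ref{thm:maintrace} with $\gamma = 1$ to the perturbed operator $\Lc_h + \eps h^{2/3}V_h$, showing
\begin{equation*}
\lim_{h \to 0_+} h^{-1/3}\Tr(\Lc_h + \eps h^{2/3}V_h - \mu h^{2/3})^1_- = G(\eps) := \frac{2}{3\pi}\sum_{k=1}^\infty \int_\R \Bigl(\mu - z_k - \tfrac{\kappa_0}{2}\,s^2 - \eps V_k(s)\Bigr)_+^{3/2}\,\dd s,
\end{equation*}
with $V_k(s) := \int_{\R_+} V(s,t)\mathrm{a}_k(t)^2\,\dd t$. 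The magnitude $h^{2/3}$ is chosen so that, under the rescaling $(s_0, t_0) \mapsto (h^{1/3}s, h^{2/3}t)$ and division of the operator by $h^{2/3}$, the perturbation becomes the smooth, compactly supported multiplication by $\eps V(s,t)$. Consequently, the Dirichlet--Neumann bracketing and quasi-state construction underlying Theorem~\ref{thm:maintrace} carry through with only additive modifications; the effective fibred limit operator is $-\partial_t^2 + t + \tfrac{\kappa_0}{2} s^2 + \eps V(s,t)$, regularised tangentially by $-h^{2/3}\partial_s^2$. Projecting onto the Airy basis $\{\mathrm{a}_k\}$ in $t$ and applying standard one-dimensional semiclassical Weyl asymptotics (with parameter $h^{1/3}$) to each fibre operator $-h^{2/3}\partial_s^2 + \tfrac{\kappa_0}{2}s^2 + z_k + \eps V_k(s)$ yields the formula for $G$.

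Set $f_h(\eps) := h^{-1/3}\Tr(\Lc_h + \eps h^{2/3}V_h - \Lambda_h)^1_-$. Each $f_h$ is convex in $\eps$ by min--max, and $f_h \to G$ pointwise by the previous step. Since $(\cdot)_+^{3/2}$ is $C^1$, so is $G$, and differentiating under the integral sign gives $G'(0) = -\int V F_\mu\,\dd s\,\dd t$. Rockafellar's theorem on pointwise-convergent sequences of convex functions then forces $f_h'(0) \to G'(0)$, and the identity $f_h'(0) = -h^{1/3}\int V_h\rho_h\,\dd x$ delivers the required convergence. The second case proceeds identically with tangential scale $h^{\alpha/2}$ and perturbation magnitude $h^\alpha$: only the mode $k = 1$ contributes because for $\alpha > 2/3$ the gap $(z_2 - z_1)h^{2/3}$ eventually dominates the counting window $\mu h^\alpha$, and the effective tangential semiclassical parameter becomes $h^{1-\alpha}$.

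The main obstacle is establishing the extended Weyl asymptotic for $\Lc_h + \eps h^{2/3}V_h$ uniformly in $\eps$ on a neighbourhood of $0$: one must verify that the error constants from Dirichlet--Neumann bracketing, quasi-state projection, and Taylor expansion of the metric in the proof of Theorem~\ref{thm:maintrace} remain independent of $\eps \in [-\eps_0, \eps_0]$. Since $\eps h^{2/3}V_h$ is comparable in magnitude to the dominant potential terms after rescaling, and its rescaled form is a smooth, uniformly bounded multiplication operator of fixed compact support, this should amount to a careful but routine adaptation of the original proof.
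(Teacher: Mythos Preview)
Your approach is essentially the paper's: the extended Weyl asymptotic with an added rescaled potential is exactly Proposition~\ref{prop:asymppotential}, and your convexity/Rockafellar argument for passing to the derivative in $\eps$ is the abstract packaging of the variational-principle inequalities $\Tr(\Lc_h+\eps h^{2/3}V_h-\Lambda_h)P_h\geq -\Tr(\Lc_h+\eps h^{2/3}V_h-\Lambda_h)_-$ (and the analogue with $-V$) that the paper applies directly before sending $\eps\to 0$. One small correction: the pointwise limit of $f_h(\eps)$ involves the \emph{exact} eigenvalues $\lambda_k(s;\eps V)$ of $-\partial_t^2+t+\eps V(s,\cdot)$ on $\R_+$, not the first-order expression $z_k+\eps V_k(s)$ obtained by projecting onto the unperturbed Airy basis (which drops $O(\eps^2)$ off-diagonal terms); this does not affect the argument since the resulting limit is still convex and has the same derivative at $\eps=0$, which is precisely how the paper computes it via perturbation theory.
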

    
    The structure of the paper is as follows. In Section \ref{sec:asymp1} we start with an approach using Dirichlet--Neumann bracketing to obtain a semiclassical result. In Section \ref{sec:maintrace} we then proceed to prove a generalised version of Theorem \ref{thm:maintrace} with the addition of a potential, which we achieve by rescaling and splitting more accurately in the orthogonal component. Finally, in Section \ref{sec:proj} we make use of this generalisation and proceed to prove Theorem \ref{thm:maindensity}. 
    \section{Semiclassical approximation for the counting function} \label{sec:asymp1}
    Our aim in this section is to deduce asymptotics for the counting function by bracketing in the tubular coordinates defined by \eqref{eqn:tubular}. According to the semiclassical approximation, it is suggested that
    \begin{align*}
        N(\Lc_h,\mu h^{3/2})&\approx (2\pi)^{-2}\abs{\left\{ (\xi,x)\in \R^2\times \Omega\colon h^2\abs{\xi}^2+x_1<\mu h^{3/2}\right\}}.
    \intertext{Thus for sufficiently small $h$ only the region near $X_0$ becomes relevant in the estimate, due to the positive curvature of $\Omega$ at $X_0$. Taking $\tau=(\tau_1,\tau_2)$ as in \eqref{eqn:tubular}, we note from \cite{cornean_two-dimensional_2022} that the Taylor series expansion for $\tau_1$ about $(0,0)$ is given by
    \begin{equation}\label{eqn:taylorexp}
        \tau_1(s,t)=t+\frac{\kappa_0}{2}s^2+\mathcal{O}_{t\rightarrow 0, s\rightarrow 0}(|s|^3+|t s^2|).
    \end{equation}
    It follows that}
        N(\Lc_h,\mu h^{3/2})&\approx \frac{1}{4\pi h^2}\int_{0}^{\delta}\int_{-\delta}^{\delta}(\mu h^{2/3}-\tau_1(s,t))_+ (1-\kappa(s)t)\dd s\dd t\\
        &\approx \frac{1}{4\pi h^2}\int_{0}^{\delta}\int_{-\delta}^{\delta}\left(\mu h^{2/3}-t-\frac{\kappa_0}{2}s^2\right)_+ (1-\kappa(s)t)\dd s\dd t\\
        &=\frac{4}{15 \pi \sqrt{2\kappa_0}}\mu^{5/2} h^{-1/3}+\mathcal{O}_{h\rightarrow 0_+}(h^{1/3}).
    \end{align*}
    Though this appears to contradict the statement of Theorem \ref{thm:maintrace}, we find that it holds for large $\mu$. The consistency is evident from the known asymptotics for the Airy zeros, which satisfy \[z_k=\frac{1}{4} (3 \pi )^{2/3} (4 k-1)^{2/3}(1+o_{k\rightarrow \infty}(1)).\] Thus as $\mu$ unlocks the levels in the leading order term the number of eigenvalues becomes semiclassical.  
    
    We start by restricting $\mathcal{L}_h$ to some region about $X_0$, sufficiently proportional to $h$, with Neumann and Dirichlet conditions. We fix $\eta\in (0,1/15)$ and define the region $\mathcal{W}_h=(-h^{1/3-\eta},h^{1/3-\eta})\times(0,h^{2/3-\eta})$, then it results from the variational principle that
    \begin{align*}
        \Lc_h-\mu h^{2/3}\leq \left(-\Delta_{\tau(\mathcal{W}_h)}^D+x_1-\mu h^{2/3}\right)&\oplus \left(-\Delta_{\Omega\backslash\tau(\mathcal{W}_h)}^D+x_1-\mu h^{2/3}\right), \text{ and }\\
        \Lc_h-\mu h^{2/3}\geq \left(-\Delta_{\tau(\mathcal{W}_h)}^N+x_1-\mu h^{2/3}\right)&\oplus \left(-\Delta_{\Omega\backslash\tau(\mathcal{W}_h)}^N+x_1-\mu h^{2/3}\right)
    \end{align*}
    in the sense of quadratic forms, where $-\Delta^{D}_{\omega}$ and $-\Delta^N_{\omega}$ denote the Dirichlet or Neumann Laplacians on $\omega\subset \R^2$. (In the case of the latter, we have the option to maintain or substitute the existing Dirichlet boundary conditions without altering the above relationship). Consequently, noting that $\{x\in \Omega\colon x_1<\mu h^{2/3}\}\subset \tau(\mathcal{W}_h)$ for sufficiently small $h$, it follows that 
    \begin{equation}\label{eqn:bracketing}
        N\left(-\Delta_{\tau(\mathcal{W}_h)}^D+x_1,\mu h^{2/3}\right)\leq N\left(\Lc_h,\mu h^{2/3}\right)\leq N\left(-\Delta_{\tau(\mathcal{W}_h)}^N+x_1,\mu h^{2/3}\right).
    \end{equation}
    Each of these operators, which are restrictions of $\mathcal{L}_h$ to $\tau(\mathcal{W}_h)$, can be written in tubular coordinates as
    \begin{equation*}
        -h^2 m^{-1} \partial_s m^{-1}\partial_s-h^2 m^{-1}\partial_t m \partial_t +\tau_1(s,t) \text{ in }L^2(\mathcal{W}_h,m\dd s \dd t),
    \end{equation*}
    where $\tau_1$ is the first coordinate of $\tau=(\tau_1,\tau_2)$ and $m(s,t)=1-\kappa(s)t$ is the Jacobian of $\tau$. Their quadratic forms correspond to 
    \begin{equation*}
        q_{h}[\psi]=\iint_{\mathcal{W}_h}\left[h^2\left(m^{-2}|\partial_s \psi|^2+|\partial_t\psi|^2\right)+\tau_1(s,t)|\psi|^2\right] m\dd s\dd t,
    \end{equation*}
    considered for suitable classes of $\phi\in H^1(\mathcal{W}_h)$ depending on the boundary conditions. 
    
    At this point, we wish to use our shrinking domain to change the operator to one on a flat strip, without the curvature term. Given the Taylor expansion \eqref{eqn:taylorexp} and the fact that the boundary of $\Omega$ is smooth and $\kappa_0>0$, we can take $h$ to be sufficiently small so that for every $(s,t)\in \mathcal{W}_{h}$
    \begin{align*}
        \abs{\tau_1(s,t)-\left(t+\frac{\kappa_0}{2}s^2\right)}\lesssim h^{1-3\eta}\text{ and }
        -h^{2/3-\eta}\lesssim m(s,t)-1\leq 0,
    \end{align*}
    with implicit constants that are independent of $h$. Then we use the above estimates to approximate $q_h$ from above and below correspondingly. To simplify notation, where we have used implicit constants above, we bound them by $h^{-\eta}$ and assume that $h$ is sufficiently small. We find that for any $\psi\in H^1(\mathcal{W}_h)$
    \begin{align*}
        q_{h}[\psi]-h^{1-4\eta}\norm{\psi}_2^2&\leq \left(1- h^{2/3-2\eta}\right)^{-1}\iint_{\mathcal{W}_h}h^2\left(|\partial_s \psi|^2+|\partial_t\psi|^2\right)+\left(t+\frac{\kappa_0}{2}s^2\right)|\psi|^2\dd s\dd t ,\intertext{ and }
        q_{h}[\psi]+h^{1-4\eta}\norm{\psi}_2^2&\geq \left(1-h^{2/3-2\eta}\right)\iint_{\mathcal{W}_h}h^2\left(|\partial_s \psi|^2+|\partial_t\psi|^2\right)+\left(t+\frac{\kappa_0}{2}s^2\right)|\psi|^2\dd s\dd t.
    \end{align*}
    Thus we reduce to operators in $L^2(\mathcal{W}_h,\dd s\dd t)$ and using the estimates \eqref{eqn:bracketing} together with those for $q_h$ above, we see that 
    \begin{equation}\label{eqn:DNBrack}
        \begin{split}
        N(\Lc_h,\mu h^{2/3})&\geq  N\left(-h^2\Delta_{\mathcal{W}_h}^D+\frac{\kappa_0}{2}s^2+t,h^{2/3}(\mu-h^{1/3-5\eta})\right) \text{ and }\\
      N(\Lc_h,\mu h^{2/3})&\leq N\left(-h^2\Delta_{\mathcal{W}_h}^N+\frac{\kappa_0}{2}s^2+t,h^{2/3}(\mu+h^{1/3-5\eta})\right).
      \end{split}
    \end{equation}
    In the following section, we will represent the errors in the preceding expression in a different manner. However, it will be crucial for our subsequent discussions that we are able to express it as $h^{1/3-5\eta}$.

    \begin{proposition}\label{prop:roughweyl}
        The following holds
        \begin{equation*}
            \lim_{h\rightarrow 0_+} h^{1/3}N(\Lc_h,\mu h^{2/3})=\frac{4\mu^{5/2}}{15\pi \sqrt{2\kappa_0}}\left(1+\mathcal{O}_{\mu\rightarrow \infty}(\mu^{-3/4})\right).
        \end{equation*}
    \end{proposition}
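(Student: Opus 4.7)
The plan is to start from the Dirichlet--Neumann sandwich \eqref{eqn:DNBrack}, which already reduces the proposition to the counting functions of the two flanking model operators $-h^2\Delta_{\mathcal{W}_h}^{D/N}+\tfrac{\kappa_0}{2}s^2+t$ evaluated at the shifted thresholds $h^{2/3}(\mu\pm h^{1/3-5\eta})$. The key observation is that the potential $\tfrac{\kappa_0}{2}s^2+t$ is separable and $\mathcal{W}_h$ is a product, so each of these operators decomposes as a tensor sum
\[
    A_s^{D/N}\otimes I+I\otimes B_t^{D/N},\qquad A_s^{D/N}=-h^2\partial_s^2+\tfrac{\kappa_0}{2}s^2,\qquad B_t^{D/N}=-h^2\partial_t^2+t,
\]
acting on $L^2(-h^{1/3-\eta},h^{1/3-\eta})$ and $L^2(0,h^{2/3-\eta})$ respectively, the Dirichlet condition at $t=0$ being retained throughout. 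Denoting the one-dimensional eigenvalues by $\alpha_n^{D/N}$ and $\beta_k^{D/N}$, the counting function factorises as
\[
    N\bigl(-h^2\Delta_{\mathcal{W}_h}^{D/N}+\tfrac{\kappa_0}{2}s^2+t,\Lambda\bigr)=\sum_{k\geq 1}\#\{n\geq 1:\alpha_n^{D/N}<\Lambda-\beta_k^{D/N}\}.
\]

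I would next replace $\alpha_n^{D/N}$ and $\beta_k^{D/N}$ by the whole-space eigenvalues $h\sqrt{2\kappa_0}(n-\tfrac12)$ of the harmonic oscillator on $\R$ and $h^{2/3}z_k$ of the Airy operator on $\R_+$. Since $\eta<1/15$, the cut-off scales $h^{1/3-\eta}$ and $h^{2/3-\eta}$ diverge, as $h\to 0_+$, relative to the localisation scales $h^{1/2}$ and $h^{2/3}$ of the corresponding Hermite and Airy eigenfunctions. Standard super-exponential tail estimates, uniform over the indices $n\lesssim\mu h^{-1/3}$ and $k\lesssim \mu^{3/2}$ that contribute below $\Lambda=h^{2/3}(\mu\pm h^{1/3-5\eta})$, then yield $\alpha_n^{D/N}=h\sqrt{2\kappa_0}(n-\tfrac12)+o(h)$ and $\beta_k^{D/N}=h^{2/3}z_k+o(h^{2/3})$. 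Inserted into the previous identity, the condition $\alpha_n^{D/N}<\Lambda-\beta_k^{D/N}$ becomes $n-\tfrac12<(\mu-z_k)_+/(h^{1/3}\sqrt{2\kappa_0})+o(1)$, so the inner cardinality equals $(\mu-z_k)_+/(h^{1/3}\sqrt{2\kappa_0})+O(1)$. Summing over the $O(\mu^{3/2})$ contributing values of $k$ and passing to $h\to 0_+$ produces
\[
    \lim_{h\to 0_+}h^{1/3}N(\Lc_h,\mu h^{2/3})=\frac{1}{\sqrt{2\kappa_0}}\sum_{k\geq 1}(\mu-z_k)_+.
\]

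The last step is to extract the large-$\mu$ behaviour of this Airy Riesz mean. Inverting the classical expansion $z_k=(3\pi/2)^{2/3}(k-1/4)^{2/3}(1+O(k^{-2}))$ gives $\#\{k\geq 1:z_k<\mu\}=\tfrac{2\mu^{3/2}}{3\pi}+O(1)$, and integrating once in $\mu$ yields $\sum_{k\geq 1}(\mu-z_k)_+=\tfrac{4\mu^{5/2}}{15\pi}+O(\mu)$, comfortably within the claimed relative error $\mathcal{O}_{\mu\to\infty}(\mu^{-3/4})$. The main delicate point is the second step: verifying that replacing the boundary conditions at $s=\pm h^{1/3-\eta}$ and $t=h^{2/3-\eta}$ by those on the whole line or half-line perturbs the one-dimensional eigenvalues by an amount small enough to survive after multiplication by $h^{1/3}$ and summation over $k$. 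This reduces to uniform exponential decay estimates for Hermite and Airy eigenfunctions on the relevant index range; the only care required is that the rate remains adequate up to $n\sim\mu h^{-1/3}$ and $k\sim\mu^{3/2}$, which is guaranteed by the choice $\eta<1/15$.
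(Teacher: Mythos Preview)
Your approach is correct and in fact proves more than the proposition asks: your intermediate identity
\[
\lim_{h\to 0_+}h^{1/3}N(\Lc_h,\mu h^{2/3})=\frac{1}{\sqrt{2\kappa_0}}\sum_{k\geq 1}(\mu-z_k)_+
\]
is precisely the $\gamma=0$ case of Theorem~\ref{thm:maintrace}, which the paper establishes only later as Proposition~\ref{prop:preciseh23}. The paper's own proof of Proposition~\ref{prop:roughweyl} is deliberately cruder and genuinely different: it further brackets $-h^2\Delta_{\mathcal W_h}^{D/N}+\tfrac{\kappa_0}{2}s^2+t$ into product boxes $I_{j,k}(\ell,L;h)$, freezes the potential on each, counts lattice points, sends $\ell\to 0$, and finally optimises the vertical box height via $L=\mu^{1/4}$, which is the source of the $\mathcal O(\mu^{-3/4})$ relative error. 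Your route instead exploits the exact separability of the model operator and recovers the Airy zeros, so your final large-$\mu$ step actually delivers a sharper remainder $\mathcal O(\mu^{-3/2})$. The paper's later proof of Proposition~\ref{prop:preciseh23} is closer in spirit to yours---it also separates variables and uses Lemma~\ref{lem:expdecay} in the $t$-direction---but it applies one-dimensional Weyl's law in $s$ rather than the exact harmonic-oscillator spectrum; that choice generalises immediately to the perturbed potentials $V\not\equiv 0$ needed there, whereas your identification with Hermite eigenvalues would not. The delicate point you flag, namely lower bounds on $\alpha_n^N$ and $\beta_k^M$ uniformly over the relevant index ranges, is exactly where an Agmon-type decay estimate is required; for the $t$-direction this is Lemma~\ref{lem:expdecay}, while for the $s$-direction you need its harmonic-oscillator analogue, which is standard but not supplied in the paper (the paper sidesteps it by using Weyl's law on the finite interval instead).
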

    \begin{proof}
        We carry out Dirichlet--Neumann bracketing; see, for example, \cite{frank_schrodinger_2022}. That is, we start by further splitting the operators in \eqref{eqn:DNBrack} into equal-sized intervals scaled in $h$, applying Dirichlet and Neumann boundary conditions. Let $\ell,L>0$ and define \[I_{j,k}(\ell,L;h)=h^{1/3}(\ell j,\ell(j+1))\times h^{2/3}(Lk,L(k+1))\] with $j\in \Z$ and $k\in \N_0$. 

        Starting with the Neumann operator constructed above, we restrict it to each interval $I_{j,k}$ that intersects with $\mathcal{W}_h$ and impose Neumann boundary conditions whilst estimating potential from below. Since the potentials become purely attractive outside of $\mathcal{W}_h$, we can trivially extend to all intervals $j\in\Z, k\in\N_0$. Using the variational principle and \eqref{eqn:DNBrack} we obtain
        \begin{align*}
            &N(\Lc_h,h^{2/3}\mu)\\
            &\leq\sum_{(j,k)\in \Z\times\N_0}N\left(-h^2\Delta^N_{I_{j,k}(\ell,L;h)}+\min_{I_{j,k}(\ell,L;h)}\left(\frac{\kappa_0}{2}s^2+t\right),h^{2/3}\left(\mu +h^{1/3-5\eta}\right)\right)\\ &=\sum_{j,k}\#\Bigg\{(m,n)\in \N_0^2\colon \frac{h^2\pi^2 m^2}{h^{2/3}\ell^2}+\frac{h^2 \pi^2 n^2}{h^{4/3} L^2}+\min_{I_{j,k}(\ell,L;h)}\left(\frac{\kappa_0}{2}s^2+t\right)< h^{2/3}\left(\mu+h^{1/3-5\eta}\right)\Bigg\}\\
            &= \sum_{j,k}\sum_{n\in\N_0}\#\Bigg\{m\in \N_0\colon \frac{h^{4/3}\pi^2 m^2}{\ell^2}<h^{2/3}\Bigg(\mu-\frac{\pi^2 n^2}{L^2}-\min_{I_{j,k}(\ell,L;1)}\left(\frac{\kappa_0}{2}s^2+t\right)+h^{1/3-5\eta} \Bigg)_+\Bigg\}.
        \end{align*}
        Since each of the sums in $j,k$ and $n$ are finite and independent of $h$, after discounting the zero terms, we see that
        \begin{align*}
            \limsup_{h\rightarrow 0_+}h^{1/3}N(\Lc_h,h^{2/3}\mu)\leq&\frac{1}{\pi}\sum_{j,k,n}\ell \left(\mu-\frac{\pi^2 n^2}{L^2}-\min_{I_{j,k}(\ell,L;1)}\left(\frac{\kappa_0}{2} s^2+t\right)\right)_+^{1/2}.
        \end{align*}
        Integrating with respect to $n, j$ and $k$ appropriately we find that
        \begin{align*}
            &\limsup_{h\rightarrow 0_+}h^{1/3}N(\Lc_h,h^{2/3}\mu)\\
            &\leq\frac{1}{\pi}\sum_{j,k}\ell\Bigg[\left(\mu-\min_{I_{j,k}(\ell,L;1)}\left(\frac{\kappa_0}{2}s^2+t\right)\right)_{+}^{1/2}+\frac{L}{4}\left(\mu-\min_{I_{j,k}(\ell,L;1)}\left(\frac{\kappa_0}{2}s^2+t\right)\right)_+ \Bigg]\\
            &\leq \frac{4}{\pi\sqrt{2\kappa_0}}\Bigg[ \left( \frac{ \pi}{8 L}\mu^2+\frac{1}{15}\mu ^{5/2}\right)+\frac{\pi}{4}\mu+\frac{L \mu^{3/2} }{6}\Bigg]+\frac{2\ell} {\pi}\Bigg[ \left( \frac{2}{3L}\mu^{3/2}+\frac{1}{8}\mu^{2}\right)+\sqrt{\mu}+\frac{L\mu}{4}\Bigg].
        \end{align*}
        Hence by taking $\ell \rightarrow 0_+$ we arrive at the following:
        \begin{align}\label{eqn:sec1upasym}
             \limsup_{h\rightarrow 0_+}h^{1/3}N(\Lc_h,h^{2/3}\mu)\leq \frac{4}{15\pi\sqrt{2\kappa_0}}\mu ^{5/2}+\frac{2L \mu^{3/2} }{3\pi\sqrt{2\kappa_0}}+\frac{\pi}{2L\pi\sqrt{2\kappa_0}}\mu^2+\frac{1}{\sqrt{2\kappa_0}}\mu.
        \end{align}

        For the lower bound, we restrict ourselves to the intervals contained in $\mathcal{W}_h$ and impose Dirichlet boundary conditions. It follows from bracketing and \eqref{eqn:DNBrack} that
        \begin{align*}
            &N(\Lc_h,h^{2/3}\mu)\\
            &\geq\sum_{(j,k)\colon I_{j,k}(\ell,L;h)\subset \mathcal{W}_h}N\left(-h^2\Delta^D_{I_{j,k}(\ell,L;h)}+\min_{I_{j,k}(\ell,L;h)}\left(\frac{\kappa_0}{2}s^2+t\right),h^{2/3}\left(\mu -h^{1/3-5\eta}\right)\right)\\ 
            &= \sum_{j,k}\#\Bigg \{(m,n)\in \N^2\colon \frac{h^2\pi^2 m^2}{h^{2/3}\ell^2}+\frac{h^2 \pi^2 n^2}{h^{4/3}L^2}+\max_{I_{j,k}(\ell,L;h)}\left(\frac{\kappa_0}{2}s^2+t\right)< h^{2/3}\left(\mu-h^{1/3-5\eta}\right)\Bigg \}\\
            &= \sum_{j,k}\sum_{n\in \N}\#\Bigg\{m\in \N\colon \frac{h^{4/3}\pi^2 m^2}{\ell^2}<h^{2/3}\Bigg(\mu-h^{1/3-5\eta}-\frac{\pi^2 n^2}{L^2}-\max_{I_{j,k(\ell,L;1)}}\left(\frac{\kappa_0}{2}s^2+t\right)\Bigg)_+\Bigg\}.
        \end{align*}
        Similar to before, we use the finiteness of the sums and integrate in $n, j$ and $k$ to get 
        \begin{align*}
            &\liminf_{h\rightarrow 0_+}h^{1/3}N(\Lc_h,h^{2/3}\mu)\\
            &\geq\frac{1}{\pi}\sum_{j,k,n}\ell \left(\mu-\frac{\pi^2 n^2}{L^2}-\max_{I_{j,k}(\ell,L;1)}\left(\frac{\kappa_0}{2}s^2+t\right)\right)_+^{1/2}\\
            &\geq \frac{1}{\pi}\sum_{j,k}\ell \Bigg[\frac{L}{4}\left(\mu-\max_{I_{j,k}(\ell,L;1)}\left(\frac{\kappa_0}{2}s^2+t\right)\right)_+-\left(\mu-\max_{I_{j,k}(\ell,L;1)}\left(\frac{\kappa_0}{2}s^2+t\right)\right)_+^{1/2}\Bigg]\\
            &\geq  \frac{4}{\pi\sqrt{2\kappa_0}}\Bigg[\frac{\mu^{5/2}}{15}\left(1-\frac{L}{\mu}\right)^{5/2}-\frac{\pi \mu^2}{8L}\Bigg]-\frac{\ell \mu^{2}}{4\pi}\left(1-\frac{L}{\mu}\right)^2.
        \end{align*}
        Now letting $\ell\rightarrow 0_+$, provided that we take $L\leq \mu$, we obtain 
        \begin{equation}\label{eqn:sec1lowasy}
            \liminf_{h\downarrow 0}h^{1/3}N(\mathcal{L}_h,h^{2/3}\mu)\geq \frac{4\mu^{5/2}}{15\pi\sqrt{2\kappa_0}}-\frac{2L\mu^{3/2}}{3\pi\sqrt{2\kappa_0}}-\frac{\pi \mu^2}{2L\pi\sqrt{2\kappa_0}}.
        \end{equation}
        To see the asymptotics as $\mu\rightarrow \infty$, we combine the limits \eqref{eqn:sec1upasym} and \eqref{eqn:sec1lowasy} choosing $L$ proportional to $\mu$. We see that choosing $L=\mu^{1/4}$ leads to the best order in the remainder term. 
    \end{proof}
    We note that the asymptotics we derive differ from the classical Weyl asymptotics \eqref{eqn:weylscounting} applied to the potential $x_1-\mu h^{2/3}$. However, a correct leading-order upper bound for $\mu\rightarrow \infty$ can be derived using the CLR-type bound found in \cite{laptev_calogero_2022}, which is applicable due to the monotonicity of the potential. In the following section, we demonstrate in Remark \ref{rem:simpleproof} that a simpler proof of this result is achievable through rescaling and a double application of Weyl's law.
    \section{Proof of Theorem \ref{thm:maintrace}}\label{sec:maintrace}
    In this section we prove a generalisation of Theorem \ref{thm:maintrace}. For $V\in C_0^\infty (\R^2_+)$ we define the rescaled potential $V_h$ as a function in $\Omega$, zero outside of $\mathcal{W}_h$ with
    \begin{equation}\label{eqn:rescaledV}
        V_h(\tau(s,t))=V(h^{-1/3}s,h^{-2/3}t) \text{ for }(s,t)\in \mathcal{W}_h.
    \end{equation}
    The idea is to introduce a potential that acts on the same scale as the low-lying eigenvalues we are concerned with. In this case, instead of bracketing in the orthogonal coordinate $t$ we use separation of variables and the construction of quasi-states to extract the eigenvalues of the operator 
    \begin{equation}\label{eqn:orthogop}
        L(s)=-\frac{\mathrm{d}^2}{\mathrm{d}t^2}+t+V(s,t)\text{ in }L^2(\R_+,\dd t)
    \end{equation}
    with Dirichlet boundary conditions, which we denote by $\{\lambda_k(s;V)\}_{k=1}$. We note that for smooth and compactly supported $V$, these eigenvalues are well-defined continuous functions of $s$. Before stating our result, we need a technical lemma from \cite{cornean_two-dimensional_2022}.
    \begin{lemma}\label{lem:expdecay}
        Let $\Lambda>0$ and $V\in C_0^\infty(\R_+)$ then there exist positive and finite constants $R=R\left(\Lambda,\norm{V}_\infty\right)$ and $C=C\left(\Lambda,\norm{V}_\infty\right)$ such that for any $\ell>R$ and any eigenvalue $\lambda<\Lambda$ of the operator 
        \begin{equation*}
            -\frac{\mathrm{d}^2}{\mathrm{d}t^2}+t+V(t) \text{ in } L^2(0,\ell),
        \end{equation*}
        with Dirichlet conditions at $t=0$ and Dirichlet or Neumann conditions at $t=\ell$, the corresponding eigenfunction $\phi_\lambda$ satisfies 
        \begin{align*}
            \int_{R}^\ell \left(\abs{\phi_\lambda^\prime}^2+ \abs{\phi_\lambda}^2\right) e^{t^{3/2}}\dd t&\leq C\norm{\phi_\lambda}_{L^2(0,\ell)}^2.
        \end{align*}
    \end{lemma}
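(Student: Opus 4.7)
The approach is a classical Agmon-type estimate with the weight adapted to the linear confining potential $t$. First, I fix $R=R(\Lambda,\norm{V}_\infty)>0$ large enough that $\supp V\subset[0,R/2]$ and $\tfrac{7}{32}t\ge\Lambda$ for all $t\ge R$. Then I construct a non-negative, non-decreasing $\Phi\in C^1([0,\infty))$ that vanishes on $[0,R/2]$, equals $\tfrac12 t^{3/2}-\tfrac12 R^{3/2}$ on $[R,\infty)$, and satisfies $(\Phi'(t))^2\le\tfrac{9}{16}t$ throughout. By construction $2\Phi(t)\ge t^{3/2}-R^{3/2}$ on $[R,\infty)$, so $e^{2\Phi(t)}\ge e^{-R^{3/2}}e^{t^{3/2}}$ there.

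The main step is the Agmon identity: multiplying the eigenvalue equation $-\phi_\lambda''+(t+V-\lambda)\phi_\lambda=0$ by $e^{2\Phi}\phi_\lambda$, integrating over $(0,\ell)$, and integrating by parts yields
\begin{equation*}
\int_0^\ell\bigl|(e^{\Phi}\phi_\lambda)'\bigr|^2\dd t+\int_0^\ell\bigl(t+V-\lambda-(\Phi')^2\bigr)e^{2\Phi}\abs{\phi_\lambda}^2\dd t=0,
\end{equation*}
where the boundary contributions $[\phi_\lambda'e^{2\Phi}\phi_\lambda]_0^\ell$ vanish by the Dirichlet condition at $t=0$ and by either boundary condition at $t=\ell$. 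On $[R,\ell]$ the potential $V$ is zero and the choice of $R$ gives $t+V-\lambda-(\Phi')^2=\tfrac{7}{16}t-\lambda\ge\tfrac{7}{32}t$, while on $[0,R]$ the integrand is bounded in absolute value by a constant depending only on $\Lambda$ and $\norm{V}_\infty$ since $\Phi$ and $\Phi'$ are uniformly controlled there. Moving the $[0,R]$ contribution to the right-hand side and using the lower bound on $e^{2\Phi}$ produces
\begin{equation*}
\int_0^\ell\bigl|(e^{\Phi}\phi_\lambda)'\bigr|^2\dd t+\int_R^\ell t\,e^{t^{3/2}}\abs{\phi_\lambda}^2\dd t\le C\norm{\phi_\lambda}_{L^2(0,\ell)}^2.
\end{equation*}

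For the derivative part of the estimate I would use the pointwise bound $\abs{\phi_\lambda'}^2\le 2e^{-2\Phi}\bigl|(e^{\Phi}\phi_\lambda)'\bigr|^2+2(\Phi')^2\abs{\phi_\lambda}^2$ and integrate $|\phi_\lambda'|^2 e^{t^{3/2}}$ over $[R,\ell]$: the first piece is controlled since $e^{t^{3/2}-2\Phi}\le e^{R^{3/2}}$ is bounded and we already have a bound on $\int|(e^\Phi\phi_\lambda)'|^2\dd t$, while the second is absorbed via $(\Phi')^2\le\tfrac{9}{16}t$ into the weighted $L^2$-bound on $\phi_\lambda$ obtained above. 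The only real technical point is designing $\Phi$ on the transition zone $[R/2,R]$ so that every implicit constant depends solely on $\Lambda$ and $\norm{V}_\infty$; the rest is routine bookkeeping, with the integrations by parts justified since $\phi_\lambda\in H^2(0,\ell)$.
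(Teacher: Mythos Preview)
The paper does not give its own proof but simply refers to \cite[Proposition~2.1]{cornean_two-dimensional_2022}, noting only a change of scale and the use of the boundedness of $V$; your Agmon-type argument is the standard route to such decay estimates and is in all likelihood what that reference contains, so the approaches coincide.

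Two small technical repairs are needed in your write-up. First, requiring $\supp V\subset[0,R/2]$ makes $R$ depend on the support of $V$, not just on $\norm{V}_\infty$ as the lemma asserts; instead simply use $V\ge-\norm{V}_\infty$ on $[R,\ell]$ and choose $R\ge\tfrac{32}{7}(\norm{V}_\infty+\Lambda)$, which still gives $t+V-\lambda-(\Phi')^2\ge\tfrac{7}{32}t$ there. Second, your description of $\Phi$ is internally inconsistent: being non-negative, non-decreasing, vanishing on $[0,R/2]$, and equal to $\tfrac12(t^{3/2}-R^{3/2})$ on $[R,\infty)$ forces $\Phi(R)=0$, hence $\Phi\equiv0$ on $[0,R]$, and then $\Phi'(R^-)=0\neq\tfrac34 R^{1/2}=\Phi'(R^+)$, contradicting $\Phi\in C^1$. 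The fix is to prescribe $\Phi'$ instead (zero on $[0,R/2]$, equal to $\tfrac34 t^{1/2}$ on $[R,\infty)$, smoothly interpolating in between while staying below $\tfrac34 t^{1/2}$) and set $\Phi(t)=\int_0^t\Phi'$; then $\Phi(R)>0$, but the only inequality you actually use, $2\Phi(t)\ge t^{3/2}-R^{3/2}$ for $t\ge R$, still holds. With these two repairs your argument is complete.
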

    This result is essentially identical to \cite[Proposition 2.1]{cornean_two-dimensional_2022}. The only difference is that we work on a different scale and use the boundedness of $V$. 
    \begin{proposition}\label{prop:asymppotential}\label{prop:preciseh23}
        Let $\gamma\geq 0$, $\mu\geq 0$ and $V\in C_0^\infty(\R^2_+)$, then 
        \begin{equation*}\label{eqn:asymppoten}
            \lim_{h\rightarrow 0_+}h^{(1-2\gamma)/3}\Tr\left(\Lc_h+h^{2/3}V_h-h^{2/3}\mu\right)_{-}^\gamma=L_{\gamma,1}^{\mathrm{cl}}\sum_{j=1}\int_{\R}\left(\mu-\frac{\kappa_0}{2}s^2-\lambda_j(s;V)\right)^{\gamma+1/2}_+\dd s.
        \end{equation*}
    \end{proposition}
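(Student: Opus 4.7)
The plan is to incorporate the smooth compactly supported potential $h^{2/3}V_h$ into the bracketing of Section \ref{sec:asymp1}, rescale so that the resulting model is semiclassical only in the tangential variable, and then separate variables using the eigenfunctions of $L(s)$ so that one-dimensional Weyl asymptotics apply. Since $V\in C_0^\infty(\R^2_+)$ and $\eta<1/15<1/3$, the support of $V_h$ lies in $\tau(\mathcal{W}_h)$ for all sufficiently small $h$, so the Dirichlet--Neumann bracketing and metric-flattening estimates leading to \eqref{eqn:DNBrack} go through verbatim with $V_h$ included, reducing $\Tr(\Lc_h+h^{2/3}V_h-\mu h^{2/3})_-^\gamma$ to the same Riesz mean for the flat operator $-h^2\Delta_{\mathcal{W}_h}^{D/N}+\tfrac{\kappa_0}{2}s^2+t+h^{2/3}V_h$, at the cost of an $O(h^{1/3-5\eta})$ shift in $\mu$. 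This shift is harmless by monotonicity of the Riesz mean in $\Lambda$ combined with the analogue of Proposition \ref{prop:roughweyl} (with $\norm{V}_\infty$ absorbed into the constants). After substituting $s=h^{1/3}\sigma$, $t=h^{2/3}\tau$ on $\tilde{\mathcal{W}}_h=(-h^{-\eta},h^{-\eta})\times(0,h^{-\eta})$ and factoring out $h^{2/3}$, the claim is equivalent to
\begin{equation*}
\lim_{h\to 0_+} h^{1/3}\Tr\bigl(\mathcal{H}_h-\mu\bigr)_-^\gamma = L_{\gamma,1}^{\mathrm{cl}}\sum_j\int_\R\bigl(\mu-\tfrac{\kappa_0}{2}\sigma^2-\lambda_j(\sigma;V)\bigr)_+^{\gamma+1/2}\dd\sigma,
\end{equation*}
where $\mathcal{H}_h=-h^{2/3}\partial_\sigma^2-\partial_\tau^2+\tau+\tfrac{\kappa_0}{2}\sigma^2+V(\sigma,\tau)$ on $\tilde{\mathcal{W}}_h$ with the induced boundary conditions.

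To prove this I would carry out a second Dirichlet--Neumann bracketing in $\sigma$ on intervals $I^\ell_j=(\ell j,\ell(j+1))$ of fixed length $\ell>0$, freezing $V(\sigma,\tau)+\tfrac{\kappa_0}{2}\sigma^2$ at its extrema over each interval. The resulting operator separates as a tensor product of $-h^{2/3}\partial_\sigma^2$ on $I^\ell_j$ (with eigenvalues $h^{2/3}\pi^2 m^2/\ell^2$) and the operator $-\partial_\tau^2+\tau+V(\sigma_*,\tau)+\tfrac{\kappa_0}{2}\sigma_*^2$ on $(0,h^{-\eta})$. Lemma \ref{lem:expdecay}, combined with testing against truncations of the true eigenfunctions of $L(\sigma_*)$ from $\R_+$, identifies the spectrum of the latter --- up to exponentially small corrections --- with $\{\lambda_j(\sigma_*;V)+\tfrac{\kappa_0}{2}\sigma_*^2\}_{j\geq 1}$. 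Counting lattice points in $m$ for each fixed $j$ then produces a Riemann sum in $\sigma_*$; taking first $h\to 0_+$ and then $\ell\to 0_+$ yields the claimed integral, using the continuity of $\sigma\mapsto\lambda_j(\sigma;V)$ and the fact that only finitely many $j$ contribute, since $\lambda_j(\sigma;V)\geq z_j\to\infty$.

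The main obstacle will be juggling the parameters $h$, $\eta$ and $\ell$ so that the errors from metric flattening ($h^{1/3-5\eta}$), from truncation in $\tau$ (exponentially small by Lemma \ref{lem:expdecay}), and from freezing the potential in $\sigma$ (of order $\ell$) all vanish in the correct order. Since for fixed $\ell$ only finitely many intervals and indices contribute, the $h\to 0_+$ limit can be taken term by term; the rough bound of Proposition \ref{prop:roughweyl} supplies the uniform tail control in $j$ and $\sigma$ needed to justify exchanging summation with the subsequent $\ell\to 0_+$ limit, thereby passing from the Riemann sum to the integral in the statement.
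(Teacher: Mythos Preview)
Your proposal is correct and follows the same architecture as the paper: incorporate $V_h$ into the bracketing and flattening of Section~\ref{sec:asymp1}, rescale so that only the tangential variable is semiclassical, and use Lemma~\ref{lem:expdecay} to identify the eigenvalues of the truncated $\tau$-operator on $(0,h^{-\eta})$ with $\lambda_j(\sigma;V)$ up to exponentially small errors. The one difference is in the last step: the paper writes the rescaled operator as a one-dimensional Schr\"odinger operator in $s$ with operator-valued potential $\mathcal V^{D/M}_h(s)$, restricts $s$ to a fixed interval $(-R,R)$ (respectively $(-\widetilde R,\widetilde R)$, chosen so that the potential is repulsive outside), replaces $\mathcal V^{D/M}_h(s)$ by the scalar $\lambda_k(s;V)$ via Lemma~\ref{lem:expdecay}, and then simply quotes the one-dimensional Weyl law from~\cite{frank_weyls_2023}; you instead re-derive that Weyl law by hand via a second bracketing in $\sigma$ on boxes of width~$\ell$ and explicit lattice counting, letting $h\to 0_+$ then $\ell\to 0_+$. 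Both routes work; the paper's is shorter, yours more self-contained. One caution on your freezing step: the extremum over $\sigma\in I^\ell_j$ of $V(\sigma,\tau)+\tfrac{\kappa_0}{2}\sigma^2$ may be attained at a $\tau$-dependent point, so it does not literally produce $V(\sigma_*,\tau)$ at a single $\sigma_*$; instead fix any $\sigma_*\in I^\ell_j$ and use $|V(\sigma,\tau)-V(\sigma_*,\tau)|\le \|\partial_\sigma V\|_\infty\,\ell$ uniformly in $\tau$, absorbing this $O(\ell)$ into the threshold before letting $\ell\to 0_+$.
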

    Taking $V\equiv 0$, it follows that the eigenvalues extracted from \eqref{eqn:orthogop} are just Airy zeros, independent of $s$. Putting this into the expression above precisely yields the first result in Theorem \ref{thm:maintrace}. 
    \begin{proof}
     To ease notation, we label 
     \begin{equation*}
         \mathcal{L}_h(\mu, V)\coloneqq \Lc_h+h^{2/3}\left(V_h-\mu\right).
     \end{equation*}
     We fix $\gamma\geq 0$ and start by recalling the construction in the previous section with the addition of a potential. Due to the boundedness of the potential $V_h$ it follows from \eqref{eqn:bracketing} and \eqref{eqn:DNBrack} that for sufficiently small $h$, 
     \begin{equation}\label{eqn:boundprop1}
        \begin{split}
        \Tr\Lc_h\left(\mu,V\right)_{-}^\gamma &\geq \Tr\left(-h^2\Delta_{\mathcal{W}_h}^D+\frac{\kappa_0}{2}s^2+t+h^{2/3}\left(V_h\circ\tau-\mu+h^{1/3-5\eta}\right)\right)_{-}^\gamma \\
    \Tr\Lc_h\left(\mu,V\right)_{-}^\gamma &\leq \Tr\left(-h^2\Delta_{\mathcal{W}_h}^M+\frac{\kappa_0}{2}s^2+t+h^{2/3}\left(V_h\circ\tau-\mu-h^{1/3-5\eta}\right)\right)_{-}^\gamma
         \end{split}
     \end{equation}
     with $M$ here denoting mixed boundary conditions, where we keep Dirichlet boundary conditions where $t=0$ and impose Neumann conditions elsewhere. In deriving 
     \eqref{eqn:boundprop1} from \eqref{eqn:DNBrack} we have used that 
     \begin{equation*}
         \abs{t\kappa(s) h^{2/3} V_h(\tau(s,t))}\lesssim h^{4/3-\eta}\norm{V}_\infty.
     \end{equation*}
     
     Now we carry out a change of scale directly and use separation of variables. Applying the unitary transformation $\mathcal{U}_h\phi(s,t)=h^{-1/2}\phi(h^{-1/3}s,h^{-2/3}t)$ to the operators on the right in \eqref{eqn:boundprop1} we obtain
     \begin{align*}
        -h^{4/3}\partial_s^2+h^{2/3}\left(-\partial_t^2+t+\frac{\kappa_0}{2}s^2+V(s,t)\right) \text{ in } L^2((-h^{-\eta},h^{-\eta})\times(0,h^{-\eta})).
     \end{align*}
     with their respective boundary conditions. It is then helpful to reformulate these, writing them as one-dimensional Schr\"odinger operators in $s$ with operator-valued potentials. In this form, the operators with Dirichlet and mixed boundary conditions are, up to a factor of $h^{2/3}$, given by 
     \begin{align*}
         -&h^{2/3}\frac{\mathrm{d}^2}{\mathrm{d}s^2}\Big\vert^D_{(-h^{-\eta},h^{-\eta})}\otimes \id+\frac{\kappa_0}{2}s^2\otimes\id +\mathcal{V}^D_h (s), \text{ and }\\
         -&h^{2/3}\frac{\mathrm{d}^2}{\mathrm{d}s^2}\Big\vert^N_{(-h^{-\eta},h^{-\eta})}\otimes \id+\frac{\kappa_0}{2}s^2\otimes\id +\mathcal{V}^M_h(s)
     \end{align*}
     in $L^2((-h^{-\eta},h^{-\eta}),\dd s;L^2(0,h^{-\eta}))$, where for each $s$
        \begin{align*}
            \mathcal{V}^D_h(s)&=-\frac{\mathrm{d}^2}{\mathrm{d}t^2}\Big\vert^D_{(0,h^{-\eta})}+t+V(s,t)\text{ and }\\
            \mathcal{V}^M_h(s)&=-\frac{\mathrm{d}^2}{\mathrm{d}t^2}\Big\vert^M_{(0,h^{-\eta})}+t+V(s,t)
        \end{align*}
    as operators in $L^2(0,h^{-\eta})$, where $M$ symbolises the imposition of Dirichlet conditions at $t=0$ and Neumann conditions at $t=h^{-\eta}$. Furthermore, we can fix the domain of the operators in $s$. Noting that 
    \begin{equation*}
        \lambda_k(\mathcal{V}_h^M(s))\geq-\norm{V}_{L^\infty(\R_+^2)} 
    \end{equation*}
    we can restrict the Neumann operator to the interval $(-\widetilde{R},\widetilde{R})$ with 
    \begin{equation}\label{eqn:R}
    \widetilde{R}=\sqrt{2/\kappa_0}(\norm{V}_\infty+2\mu)^{1/2}
    \end{equation}
    so that the potential is purely repulsive outside of this set. Whilst for the Dirichlet case we can restrict it to any smaller interval and use domain monotonicity.

    Thus, we obtain from \eqref{eqn:boundprop1} and the above that for every $\varepsilon\in (1/2,1)$ and $R>0$ there exists $h^\prime>0$ such that for all $h<h^\prime$,
        \begin{equation}\label{eqn:prop1finalbound}
            \begin{split}
            \Tr\Lc_h\left(\mu,V\right)_{-}^\gamma\geq &h^{2\gamma/3}\sum_{k=1}\Tr\left(-h^{2/3}\frac{\mathrm{d}^2}{\mathrm{d}s^2}\Big\vert^D_{(-R,R)}+\frac{\kappa_0}{2}s^2+\lambda_k\left(\mathcal{V}^D_h(s)\right)-\varepsilon\mu\right)^\gamma_{-}\\
            \Tr\Lc_h\left(\mu,V\right)_{-}^\gamma \leq &h^{2\gamma/3}\sum_{k=1}\Tr\left(-h^{2/3}\frac{\mathrm{d}^2}{\mathrm{d}s^2}\Big\vert^N_{(-\widetilde{R},\widetilde{R})}+\frac{\kappa_0}{2}s^2+\lambda_k\left(\mathcal{V}^M_h(s)\right)-\varepsilon^{-1}\mu \right)^\gamma_{-}.
            \end{split}
        \end{equation}
        Moreover, we note that the number of eigenvalues of $\mathcal{V}_h^M(s)$ and $\mathcal{V}_h^D(s)$ that we need to consider in the sums above are finite, uniformly in $s\in\R$ and $h<h^\prime$, in particular
        \begin{equation*}
            \widetilde{N}\coloneqq \max_{\varepsilon\in (1/2,1)}\big\{N(\mathcal{V}^D_h(s),\varepsilon\mu),N(\mathcal{V}^M_h(s),\varepsilon^{-1}\mu)\big\}\lesssim_{\mu} \norm{V}_{L^\infty(\R^2_+)} .
        \end{equation*}
        The idea now is to use Lemma \ref{lem:expdecay} to show that the eigenvalues $\lambda_k(\mathcal{V}^D(s))$ and $\lambda_k(\mathcal{V}^M(s))$ converge to the eigenvalues of the Dirichlet operator $L(s)$ as $h\rightarrow 0_+$, uniformly in $s$. Then we can apply the standard form of Weyl's law for $\gamma-$Riesz means of Schr\"odinger operators on finite intervals with Dirichlet and Neumann conditions to each of the operators above; see, for example, \cite{frank_weyls_2023}. Finally, by taking $R\rightarrow \infty$ and $\varepsilon\rightarrow 1_{-}$ we will obtain the result.

        We begin with the eigenvalues of $\mathcal{V}^M_h(s)$. Fixing $s\in \R$, we denote by $\phi_{k,h}$ the eigenfunction corresponding to $\lambda_k(\mathcal{V}_h^M(s))$, satisfying Neumann conditions at $t=h^{-\eta}$. Take $\chi\in C^\infty(\R)$ with $0\leq \chi\leq 1$ with $\chi(t)=1$ for $t<0$ and $\chi(t)=0$ for $t\geq 1$ and such that $\norm{\partial_t\chi}_\infty < \infty$. Then for $0<\widetilde{\eta}<\eta$ define $\chi_h(t)=\chi(t-h^{-\widetilde{\eta}})$. It follows that the cut-off functions $\chi_h\phi_{k,h}$ lie in the form domain of the operator $L(s)$ given by \eqref{eqn:orthogop}, after being trivially extended by zero. The min--max principle for eigenvalues then yields
        \begin{align*}
            \lambda_1(s;V)\leq \frac{\left(L(s)\phi_{1,h}\chi_h,\phi_{1,h}\chi_h\right)_{L^2(\R_+)}}{\norm{\phi_{1,h}\chi_h}_{L^2(\R_+)}^2}&=\frac{\left(\mathcal{V}^M_{h}(s)\phi_{1,h}\chi_h,\phi_{1,h}\chi_h\right)_{L^2(0,h^{-\eta})}}{\norm{\phi_{1,h}\chi_h}_{L^2(0,h^{-\eta})}^2}\\
            &\leq \lambda_1(\mathcal{V}^M_h(s))+C e^{- h^{-3\widetilde{\eta}/2}}
        \end{align*}
        with a finite constant $C<\infty$ that is independent of $h$ and $s$. In the last line we have used the decay estimate from Lemma \ref{lem:expdecay} together with the boundedness of $\chi$ and $\partial_t\chi$, to show that for all $1\leq k\leq \widetilde{N}$ we have 
        \begin{align*}
            \abs{(\mathcal{V}^M_{h}\phi_{k,h},\phi_{k,h})_{L^2(0,h^{-\eta})}-\left(\mathcal{V}^M_{h}\phi_{k,h}\chi_h,\phi_{k,h}\chi_h\right)_{L^2(0,h^{-\eta})}}&\lesssim e^{- h^{-3\widetilde{\eta}/2}}\norm{\phi_{k,h}}_{L^2(0,h^{-\eta})}^2\text{, and }\\
            \abs{\norm{\chi_h\phi_{k,h}}_{L^2(0,h^{-\eta})}^2-\norm{\phi_{k,h}}_{L^2(0,h^{-\eta})}^2}&\lesssim e^{- h^{-3\widetilde{\eta}/2}}\norm{\phi_{k,h}}_{L^2(0,h^{-\eta})}^2
        \end{align*}
        where the implicit constants depend only on $\chi$, $\mu$ and $\norm{V}_{L^\infty(\R^2_+)}$.

        To deduce a similar statement for the higher eigenvalues, we note that
        \begin{equation*}
            \abs{\left(\chi \phi_{k,h},\chi_h \phi_{j,h}\right)_{L^2(\R_+)}-\delta_{jk}}\lesssim e^{- h^{-3\eta/2}},
        \end{equation*}
        uniformly in $1\leq j,k\leq \widetilde{N}$ and independent of $s$. Thus, $h$ can be chosen sufficiently small so that for all $k\leq \widetilde{N}$ the set $\{\chi_h \phi_{j,h}\}_{k=1}^k$ forms a $k$ dimensional subspace of $L^2(\R_+)$. Therefore, by the min-max principle and the decay estimates above, we have  
        \begin{equation}\label{eqn:minmaxMixed}
            \begin{split}
            \lambda_k(s;V)\leq \max_{\phi\in \{\chi_h \phi^h_k\}_{j=1}^k}\frac{(L(s)\phi,\phi)_{L^2(\R_+)}}{\norm{\phi}_{L^2(\R_+)}}&=\frac{\left(\mathcal{V}^M_{h}(s)\phi_{k,h}\chi_h,\phi_{k,h}\chi_h\right)_{L^2(0,h^{-\eta})}}{\norm{\phi_{k,h}\chi_h}_{L^2(0,h^{-\eta})}^2}\\&\leq \lambda_k(\mathcal{V}_h^M(s))+\widetilde{C} e^{-h^{-3\widetilde{\eta}/2}}
            \end{split}
        \end{equation}
        for all $k\leq \widetilde{N}$, with the constant $\widetilde{C}<\infty$ independent of $k$, $h$ and $s$. 

        We now turn to the Dirichlet operator $\mathcal{V}_h^D(s)$. This time we fix $s\in \R$ and work with the eigenfunctions of $L(s)$, which we denote by $\psi_k$. Cutting these off in the set $(0,h^{-\eta})$ and using estimates analogous to the above from Lemma \ref{lem:expdecay} we see that there exists $h$ sufficiently small so that for all $k\leq \widetilde{N}$
        \begin{equation}\label{eqn:minmaxDirich}
            \begin{split}\lambda_k(\mathcal{V}_h^D(s))\leq\max_{\psi\in\{\psi_j\}_{j=1}^k} \frac{(\mathcal{V}_h^D(s) \psi_1,\psi_1)_{L^2(0,h^{-\eta})}}{\norm{\psi_1}^2_{L^2(0,h^{-\eta})}}&=\frac{(L(s) \psi_k,\psi_k)_{L^2(0,h^{-\eta})}}{\norm{\psi_k}^2_{L^2(0,h^{-\eta})}}\\
            &\leq \lambda_k(s;V)+\widetilde{\widetilde{C}} e^{-h^{-3\eta/2}}
            \end{split}
        \end{equation}
        with some constant $\widetilde{\widetilde{C}}<\infty$ independent of $k$, $h$ and $s$. 

        We insert the estimates \eqref{eqn:minmaxMixed} and \eqref{eqn:minmaxDirich} in \eqref{eqn:prop1finalbound}, where we incorporate the errors into $\varepsilon$, noting that we can still take it as close to $1$ for all $h<h^{\prime}$ with $h^\prime$ small. Then after applying Weyl's law, it follows that 
        \begin{equation*}
            \limsup_{h\rightarrow 0_+}h^{(1-2\gamma)/3}\Tr\Lc_h\left(\mu,V\right)_{-}^\gamma\leq L_{\gamma,1}^\mathrm{cl}\sum_{k=1}\int_{\R}\left(\varepsilon^{-1} \mu-\lambda_k(s,V)-\frac{\kappa_0}{2}s^2\right)_+^{\gamma+1/2}\dd s
        \end{equation*}
        and
        \begin{align*}
            \liminf_{h\rightarrow 0_+}h^{(1-2\gamma)/3}\Tr\Lc_h\left(\mu,V\right)_{-}^\gamma &\geq L_{\gamma,1}^{\mathrm{cl}}\sum_{j}\int_{-R}^R \left(\varepsilon\mu-\lambda_{k}(s;V)-\frac{\kappa_0}{2}s^2\right)_+^{\gamma+1/2}\dd s,
        \end{align*} 
        thus by taking $\varepsilon\rightarrow 1_+$ and $R\rightarrow \infty$ we obtain the result. 
    \end{proof}
    
    \begin{remark}
        The assumption in Proposition \ref{prop:preciseh23} that $V\in C_0^\infty(\R_+^2)$ can be relaxed. It is possible to extend to the class of $V\in L^{\gamma+1}(\R^2_+)$ using an approximation argument similar to that in \cite[Section 4.7]{frank_schrodinger_2022}. 
        
        Take a sequence $C_0^\infty(\R^2_+)\ni V^{(n)}\rightarrow V$ and denote the rescaling of the former according to \eqref{eqn:rescaledV} by $V_h^{(n)}$. Then for any $\theta\in (0,1)$ one can split our operator into 
        \begin{align*}
           \Lc_h(\mu,V)=&\left(-h^2(1-\theta)\Delta_\Omega^D+x_1+h^{2/3}(V^{(n)}_{h}-\mu)\right)+\left(-h^2\theta\Delta_\Omega^D+h^{2/3}(V_h-V^{(n)}_{h})\right).
        \end{align*}
        and see that $N(\Lc_h(\mu,V),0)$ is bounded from above by
        \begin{equation*}
            N\left(-h^2(1-\theta)\Delta_\Omega^D+x_1+h^{2/3}V^{(n)}_{h}, h^{2/3}\mu\right)+N\left(-h^2\theta\Delta_\Omega^D+h^{2/3}(V_h-V^{(n)}_{h}),0\right). 
        \end{equation*}
        For the first term, we can apply Proposition \ref{prop:preciseh23} and for the second term, use the CLR type bound found in \cite{frank_bound_2018} to control the limit as $h\rightarrow 0_+$. Taking $n\rightarrow \infty$ and $\theta\rightarrow 0_+$ yields the correct upper bound. The argument for the lower bound and for the $\gamma-$Riesz means is similar.
    \end{remark}

    \begin{remark}\label{rem:simpleproof}
        The above analysis leads to a simpler proof of Proposition \ref{prop:roughweyl}. To see this, note that if we don't approximate using quasi-states then Weyl's law applied to the statement \eqref{eqn:prop1finalbound} yields that for any $R>0$
        \begin{align*}
            \liminf_{h\rightarrow 0_+}h^{1/3}N\left(\Lc_h,\mu h^{2/3}\right)&\geq \int_{\R}\sum_{k=1}\left(\mu-\frac{\kappa_0}{2}s^2-\lambda_k(\mathcal{V}^D_R)\right)^{1/2}_+\dd s\\
            &=\int_{\R}\Tr\left(\mathcal{V}^D_R+\frac{\kappa_0}{2}s^2-\mu\right)^{1/2}_{-}\dd s\\
            &=\int_{\R}\Tr\left(\frac{\mathrm{d}^2}{\mathrm{d}t^2}\Big\vert^D_{(0,R^{-\eta})}+t+\frac{\kappa_0}{2}s^2-\mu\right)^{1/2}_{-}\dd s.
        \end{align*}
        Applying Weyl's law again for the trace of the operator in $t$ as $\mu\rightarrow \infty$ one obtains a bound from below with the same leading-order term as Proposition \ref{prop:roughweyl}. The same argument applied to the Neumann operator gives the result. 
    \end{remark}
    Next, look at the asymptotic number of eigenvalues between the second and third levels in \eqref{eqn:eigencornean}. Let $\alpha\in (2/3,1)$, then we consider for $V\in C_0^\infty(\R_+^2)$ a modified form of rescaled potential $V_{h,\alpha}$, supported in $\tau(\mathcal{W}_h)$, with \begin{equation}\label{eqn:rescalalpha}
    V_{h,\alpha}(\tau(s,t))=V(h^{-\alpha/2}s,h^{-2/3}t).
    \end{equation}
    To simplify the notation, we employ
     \begin{equation*}
         \mathcal{L}_h(\mu, V;\alpha)\coloneqq \Lc_h-z_1 h^{2/3}+h^\alpha\left(V_{h,\alpha}-\mu\right)
     \end{equation*}
     and find asymptotics for the sums of its negative eigenvalues. The crucial element here is that we find an explicit dependence on the normalised Airy function $\mathrm{a}_1$ given by \eqref{eqn:airnorm}. That is, the eigenfunction of the operator $L(s)$ given in \eqref{eqn:orthogop}, which arises in the following result from linear perturbation theory. 
    \begin{proposition}\label{prop:precisealpha}
        Let $\gamma\geq 0$, $\alpha\in (2/3,1)$, $\mu\geq 0$ and $V\in C_0^\infty(\R^2_+)$, then 
        \begin{equation*}
            \lim_{h\rightarrow 0_+} h^{1-\alpha(1+\gamma)} \Tr \Lc\left(\mu,V;\alpha\right)_{-}^\gamma=L_{\gamma,1}^{\mathrm{cl}}\int_{\R}\left(\mu-\frac{\kappa_0}{2}s^2-\int_{\R_+}V(s,t)\mathrm{a}_1(t)^2\dd t\right)^{\gamma+1/2}_+\dd s.
        \end{equation*}
    \end{proposition}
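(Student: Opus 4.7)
The plan is to mirror the proof of Proposition \ref{prop:preciseh23}, adapted to the finer tangential scale $h^{\alpha/2}$ and the shift by $z_1 h^{2/3}$. First I would perform Dirichlet--Neumann bracketing on the anisotropic window $\mathcal{W}_h = (-h^{\alpha/2-\eta}, h^{\alpha/2-\eta}) \times (0, h^{2/3-\eta})$ for sufficiently small $\eta > 0$, chosen so that the Taylor expansion \eqref{eqn:taylorexp} and the Jacobian corrections contribute errors of size $o(h^\alpha)$ in the quadratic form. This reduces $\Lc_h(\mu,V;\alpha)$ to operators on the flat strip $\mathcal{W}_h$ with potential $t + \frac{\kappa_0}{2}s^2 + h^\alpha V_{h,\alpha}(\tau(s,t)) - z_1 h^{2/3} - h^\alpha \mu$ and either Dirichlet or mixed (Dirichlet on $t=0$, Neumann elsewhere) conditions. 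The unitary rescaling $\mathcal{U}_h\phi(s,t) = h^{-\alpha/4-1/3}\phi(h^{-\alpha/2}s, h^{-2/3}t)$, followed by factoring out $h^\alpha$, converts each bracketed operator into $h^\alpha$ times
\begin{equation*}
\mathcal{H}_h = -h^{2-2\alpha}\partial_s^2 + h^{2/3-\alpha}\left(-\partial_t^2 + t - z_1\right) + \frac{\kappa_0}{2}s^2 + V(s,t) - \mu + o(1)
\end{equation*}
on $L^2((-h^{-\eta}, h^{-\eta}) \times (0, h^{-\eta}))$ with the inherited boundary conditions.

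The crucial feature is that $\mathcal{H}_h$ sits in a Born--Oppenheimer regime. The transverse operator $H_h(s) = h^{2/3-\alpha}(-\partial_t^2 + t - z_1) + V(s,\cdot)$ is dominated, since $\alpha > 2/3$, by a large multiple of a nonnegative operator whose first eigenvalue is $0$, with eigenfunction $\mathrm{a}_1$, and whose second eigenvalue is $h^{2/3-\alpha}(z_2 - z_1) \to \infty$. Writing $\epsilon = h^{\alpha-2/3} \to 0_+$, the rescaled operator $\epsilon H_h(s) = -\partial_t^2 + t - z_1 + \epsilon V(s,\cdot)$ is an analytic perturbation in the gap, and standard Rayleigh--Schr\"odinger theory gives
\begin{equation*}
\nu_1(s;h) = \int_{\R_+} V(s,t)\,\mathrm{a}_1(t)^2\,\mathrm{d}t + \mathcal{O}(h^{\alpha-2/3}),
\end{equation*}
uniformly in $s$, while each higher eigenvalue $\nu_k(s;h)$ with $k\geq 2$ diverges to $+\infty$ at rate $h^{2/3-\alpha}$. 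The corresponding statement for the truncated interval $(0, h^{-\eta})$ with Dirichlet or mixed conditions is obtained via Lemma \ref{lem:expdecay} by the exponential-decay cutoff argument already used in Proposition \ref{prop:preciseh23}. Consequently only the first transverse band can contribute to the negative spectrum of $\mathcal{H}_h$ for small $h$, and a matching pair of min--max arguments (trial functions of the form $\phi(s)\mathrm{a}_1(t)$ for the upper bound; projection onto the first band together with a spectral-gap resolvent estimate for the lower bound) identifies $\Tr(\mathcal{H}_h)_{-}^{\gamma}$, up to $o(h^{\alpha-1})$, with the trace of the effective one-dimensional Schr\"odinger operator
\begin{equation*}
-h^{2-2\alpha}\frac{\mathrm{d}^2}{\mathrm{d}s^2} + \frac{\kappa_0}{2}s^2 + \int_{\R_+} V(s,t)\,\mathrm{a}_1(t)^2\,\mathrm{d}t - \mu
\end{equation*}
on $(-h^{-\eta}, h^{-\eta})$ with Dirichlet or Neumann conditions.

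The final step is standard one-dimensional semiclassical Weyl asymptotics with small parameter $h^{1-\alpha}$. Applying the 1D Weyl formula (e.g.~\cite{frank_weyls_2023}), and using the confining term $\tfrac{\kappa_0}{2}s^2$ to pass harmlessly from $(-h^{-\eta},h^{-\eta})$ to $\R$, gives a trace asymptotic to $L_{\gamma,1}^{\mathrm{cl}} h^{\alpha-1}$ times the claimed integral; combining with the prefactor $h^{\alpha\gamma}$ from $\Tr(h^\alpha A)_-^\gamma = h^{\alpha\gamma}\Tr A_-^\gamma$ produces the correct $h^{\alpha(1+\gamma)-1}$ scaling, whence the asserted asymptotics. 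The main obstacle I anticipate is making the Born--Oppenheimer reduction quantitatively uniform: the error $\mathcal{O}(h^{\alpha-2/3})$ in $\nu_1(s;h)$ must translate, after being inserted into the effective 1D trace, into an error of order $o(h^{\alpha-1})$ in $\Tr(\mathcal{H}_h)_{-}^{\gamma}$. That this holds precisely when $\alpha > 2/3$ is the quantitative content of the strict inequality in the hypothesis; verifying it for both the Dirichlet and Neumann brackets simultaneously requires combining the exponential-decay cutoff of Lemma \ref{lem:expdecay} with a careful resolvent estimate based on the persisting spectral gap of $-\partial_t^2 + t - z_1$.
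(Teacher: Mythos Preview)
Your strategy---bracket, pass to tubular coordinates, rescale, isolate the first transverse band via the gap $h^{2/3-\alpha}(z_2-z_1)\to\infty$ and Rayleigh--Schr\"odinger perturbation theory, then apply one-dimensional Weyl asymptotics with parameter $h^{1-\alpha}$---is exactly what the paper does, and your identification of Lemma~\ref{lem:expdecay} as the tool that transfers the perturbation expansion from $\R_+$ to the truncated interval is correct.

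There is, however, a real gap in your bracketing step. You take the tangential window of half-width $h^{\alpha/2-\eta}$ with $\eta>0$ ``sufficiently small''. Since $\alpha/2>1/3$, for small $\eta$ this window is strictly narrower than the sublevel set $\{x\in\Omega:\,x_1<z_1h^{2/3}+\mu h^{\alpha}\}$, which extends to $|s|\sim h^{1/3}$. Hence the Neumann piece on $\Omega\setminus\tau(\mathcal{W}_h)$ still sees potential of order $-z_1h^{2/3}$ near $X_0$ and cannot be discarded; your upper bound does not follow as written (the Dirichlet lower bound is fine by domain monotonicity). The repair is either to take $\eta$ in the nonempty interval $(\alpha/2-1/3,\,\alpha/6)$, so that the window contains the sublevel set while the Taylor remainder $|s|^3\lesssim h^{3\alpha/2-3\eta}$ stays $o(h^\alpha)$, or---more simply, and this is what the paper does---to keep the \emph{original} window $(-h^{1/3-\eta},h^{1/3-\eta})\times(0,h^{2/3-\eta})$ from Section~\ref{sec:asymp1} with $\eta<(1-\alpha)/5$, rescale $s$ by $h^{-\alpha/2}$ to an interval of length $h^{1/3-\alpha/2-\eta}\to\infty$, and only afterwards use the confinement $\tfrac{\kappa_0}{2}s^2$ to restrict to a fixed finite interval $(-\widetilde R,\widetilde R)$. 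That route reuses \eqref{eqn:DNBrack} verbatim and avoids the competing constraints on $\eta$ altogether.

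A secondary remark: the paper does not invoke a Born--Oppenheimer resolvent estimate. It writes the bracketed operators as Schr\"odinger operators in $s$ with operator-valued potential $\mathcal{V}^{D/M}_h(s;\alpha)=-\partial_t^2+t+h^{\alpha-2/3}V(s,t)$, replaces these by their scalar eigenvalue branches $\lambda_k(\mathcal{V}^{D/M}_h(s;\alpha))$, and then observes directly that for $k\geq2$ the resulting one-dimensional potential is eventually nonnegative. Your projection/spectral-gap argument would of course work too, but the paper's route is shorter once one accepts the reduction to band functions.
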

    \begin{proof}
     We begin by fixing $\gamma\geq 0$ and choosing $\eta\in (0,(1-\alpha)/5)$. The latter ensures that the errors introduced in \eqref{eqn:DNBrack} can be kept on a scale of $o_{h\rightarrow 0_+}(h^{\alpha})$. Then we find that for any $\varepsilon\in (1/2,1)$ there exists $h^\prime$ such that for all $h<h^\prime$ 
     \begin{equation*}
        \begin{split}
        \Tr\Lc_h(\mu,V;\alpha)_{-}^\gamma &\geq \Tr\left(-h^2\Delta_{\mathcal{W}_h}^D+\frac{\kappa_0}{2}s^2+t-z_1h^{2/3}+h^{\alpha}\left(V_{h,\alpha}\circ\tau-\varepsilon\mu\right)\right)_{-}^\gamma \\
        \Tr\Lc_h(\mu,V;\alpha)_{-}^\gamma &\leq \Tr\left(-h^2\Delta_{\mathcal{W}_h}^M+\frac{\kappa_0}{2}s^2+t-z_1h^{2/3}+h^{\alpha}\left(V_{h,\alpha}\circ\tau-\varepsilon^{-1}\mu\right)\right)_{-}^\gamma
         \end{split}
     \end{equation*}
     where we have used the boundedness of $V$ in $\R^2_+$.
     
     Applying a change of scale induced by the unitary transformation $\mathcal{U}_{h,\alpha}\phi(s,t)=h^{-1/3-\alpha/4}\phi(h^{-\alpha/2}s,h^{-2/3}t)$ to the operators above we obtain
     \begin{align*}
        -h^{2-\alpha}\partial_s^2+h^{\alpha}\frac{\kappa_0}{2}s^2+h^{2/3}\left(-\partial_t^2+t+h^{\alpha-2/3}V(s,t)\right)
     \end{align*}
     in the rescaled domain, with their respective boundary conditions. We then think of these operators as one-dimensional Schr\"odinger operators in $s$ with operator-valued potentials. We find that operators with Dirichlet and mixed boundary conditions are, up to a factor of $h^{2/3}$, given by 
     \begin{align*}
         -&h^{4/3-\alpha}\frac{\mathrm{d}^2}{\mathrm{d}s^2}\Big\vert^D_{(-h^{1/3-\alpha/2-\eta},h^{1/3-\alpha/2-\eta})}\otimes \id+h^{\alpha-2/3}\frac{\kappa_0}{2}s^2\otimes\id +\mathcal{V} D_h (s), \text{ and }\\
         -&h^{4/3-\alpha}\frac{\mathrm{d}^2}{\mathrm{d}s^2}\Big\vert^N_{(-h^{1/3-\alpha/2-\eta},h^{1/3-\alpha/2-\eta})}\otimes \id+h^{\alpha-2/3}\frac{\kappa_0}{2}s^2\otimes\id +\mathcal{V}^M_h(s)
     \end{align*}
     in $L^2((-h^{1/3-\alpha/2-\eta},h^{1/3-\alpha/2-\eta}),\dd s;L^2(0,h^{-\eta}))$, where for each $s$
        \begin{align*}
            \mathcal{V}^D_h(s;\alpha)&=-\frac{\mathrm{d}^2}{\mathrm{d}t^2}\Big\vert^D_{(0,h^{-\eta})}+t+h^{\alpha-2/3}V(s,t)\text{ and }\\
            \mathcal{V}^M_h(s;\alpha)&=-\frac{\mathrm{d}^2}{\mathrm{d}t^2}\Big\vert^M_{(0,h^{-\eta})}+t+h^{\alpha-2/3}V(s,t)
        \end{align*}
    as operators in $L^2(0,h^{-\eta})$, with $M$ denoting mixed conditions as before.

        Therefore, together with domain monotonicity we see that for every $\varepsilon\in (1/2,1)$ and $R>0$ there exists $h^\prime >0$ such that for all $h<h^\prime$, $\Tr\Lc_h\left(\mu,V;\alpha\right)_{-}^\gamma$ is bounded from below by 
        \begin{align}
            &h^{2\gamma/3}\sum_{k=1}\Tr\left(-h^{4/3-\alpha}\frac{\mathrm{d}^2}{\mathrm{d}s^2}\Big\vert^D_{(-R,R)}+h^{\alpha-2/3}\left(\frac{\kappa_0}{2}s^2-\mu\right)+\lambda_k\left(\mathcal{V}^D_h(s;\alpha)\right)-z_1\right)^\gamma_{-}\label{eqn:firstalphaprop}\intertext{and bounded from above by}&h^{2\gamma/3}\sum_{k=1}\Tr\left(-h^{4/3-\alpha}\frac{\mathrm{d}^2}{\mathrm{d}s^2}\Big\vert^N_{(-\widetilde{R},\widetilde{R})}+h^{\alpha-2/3}\left(\frac{\kappa_0}{2}s^2-\mu\right)+\lambda_k\left(\mathcal{V}^M_h(s;\alpha)\right)-z_1 \right)^\gamma_{-}\label{eqn:secondalphaprop}
        \end{align}
        where $\widetilde{R}$ is given by \eqref{eqn:R}. The approach now is to use Lemma \ref{lem:expdecay} to show that the first eigenvalues of $\mathcal{V}^D_h(s;\alpha)$ and $\mathcal{V}^M_h(s;\alpha)$ converge to the first eigenvalue of the operator \[L_{h}(s;\alpha)\coloneqq-\frac{\mathrm{d}^2}{\mathrm{d}t^2}+t+h^{\alpha-2/3}V(s,t)\] in $L^2(\R_+)$ with Dirichlet conditions, as $h\rightarrow 0_+$, uniformly in $s$. To do this, we use regular perturbation theory for the eigenvalues of $L_h(s;\alpha)$, see, for example, \cite[Section XII.2]{reed_methods4}. It follows that for any fixed $k\geq 1$, if $\phi_k$ is the normalised $k$th eigenfunction of $L_0(s;\alpha)$, then
        \begin{equation}
            \begin{split}\label{eqn:perturbalpha}
            \lambda_k(L_h(s;\alpha))&=\lambda_k(L_0(s;\alpha))+h^{\alpha-2/3}\int_{\R_+}V(s,t)\phi_k(t)^2 \dd t+\mathcal{O}_{h\rightarrow 0_+}\left(h^{2\alpha-4/3}\right)\\
            &=z_k+h^{\alpha-2/3}\int_{\R_+}V(s,t)\mathrm{a}_k(t)^2\dd t+\mathcal{O}_{h\rightarrow 0_+}\left(h^{2\alpha-4/3}\right).
            \end{split}
        \end{equation}
        The fact that for any given $k$ the error term in \eqref{eqn:perturbalpha} is uniformly finite in $s$ can be seen from the boundedness and compact support of $V$. It is then clear from \eqref{eqn:perturbalpha} that we only need to consider the first eigenvalue in both \eqref{eqn:firstalphaprop} and \eqref{eqn:secondalphaprop}, since all other terms will be zero for suitably small $h$.

        Then we perform the same cutting off of the eigenfunctions of $\mathcal{V}_h^M(s;\alpha)$ and of $L_h(s;\alpha)$ as in the proof of Proposition \ref{prop:preciseh23}. With the exponential decay estimate from Lemma \ref{lem:expdecay} and the min--max principle we obtain that 
        \begin{equation}
            \begin{split}\lambda_1(\mathcal{V}^M_h(s;\alpha))&\geq z_1+h^{\alpha-2/3}\int_{\R_+}\mathrm{a}_1(t)^2 V(s,t)\dd t - C h^{2\alpha-4/3}\\ \label{eqn:firsteigasymp}
            \lambda_1(\mathcal{V}^D_h(s;\alpha))
            &\leq z_1+h^{\alpha-2/3}\int_{\R_+}\mathrm{a}_1(t)^2 V(s,t)\dd t+C h^{2\alpha-4/3}
            \end{split}
        \end{equation}
        with a finite constant $C<\infty$ that is independent of $h$ and $s$.
        
        Inserting \eqref{eqn:firsteigasymp} into \eqref{eqn:firstalphaprop} and \eqref{eqn:secondalphaprop}, absorbing the error into $\varepsilon$, we conclude that
        \begin{equation*}
            \begin{split}
            \Tr\Lc_h(\mu,V;\alpha)_{-}^\gamma\geq &h^{\alpha\gamma}\Tr\left(-h^{2-2\alpha}\frac{\mathrm{d}^2}{\mathrm{d}s^2}\Big\vert^D_{(-R,R)}+\frac{\kappa_0}{2}s^2-\int_{\R_+}V(s,t)\mathrm{a}_1(t)^2\dd t-\varepsilon\mu\right)^\gamma_{-}\\
            \Tr\Lc_h(\mu,V;\alpha)_{-}^\gamma \leq &h^{\alpha\gamma}\Tr\left(-h^{2-2\alpha}\frac{\mathrm{d}^2}{\mathrm{d}s^2}\Big\vert^N_{(-\widetilde{R},\widetilde{R})}+\frac{\kappa_0}{2}s^2+\int_{\R_+}V(s,t)\mathrm{a}_1(t)^2\dd t-\varepsilon^{-1}\mu \right)^\gamma_{-}.
            \end{split}
        \end{equation*}
        Then by applying Weyl's law to these operators and taking $\varepsilon\rightarrow 1_+$ and $R\rightarrow \infty$ we obtain the result. 
    \end{proof}
    Applying Proposition \ref{prop:precisealpha} to $V\equiv 0$ completes the proof of Theorem \ref{thm:maintrace}.
    
    \section{Proof of Theorem \ref{thm:maindensity}}\label{sec:proj}
    In this final section, we culminate our results from previous sections to deduce asymptotics for the density of the spectral projector onto low--lying states. The argument follows from that used by Frank in \cite{frank_weyls_2023} and uses regular perturbation theory as in Proposition \ref{prop:precisealpha}. 
    
    \begin{proof}[Proof of Theorem \ref{thm:maindensity}]
        Let $\rho_h$ denote the density of $\Gamma_h=(\mathcal{L}_h-h^{2/3}\mu)^0_{-}$. Then we fix $V\in C_0^\infty(\R_+^2)$ and take $V_h$ as the rescaling of $V$ according to \eqref{eqn:rescaledV}. It follows from the variational principle that
        \begin{align*}
            h^{2/3}\int_{\Omega}V_h(x)\rho_h(x)\dd x&=\Tr\left(\mathcal{L}_h+h^{2/3}V_h-h^{2/3}\mu\right)\Gamma_h-\Tr\left(\mathcal{L}_h-h^{2/3}\mu\right)\Gamma_h\\
            &\geq -\Tr\left(\mathcal{L}_h+h^{2/3}V_h-h^{2/3}\mu\right)_{-}+
        \Tr\left(\mathcal{L}_h-h^{2/3}\mu\right)_{-}
        \end{align*}
        where we have used equality in the second term. Thus, from Proposition \ref{prop:asymppotential} we have
        \begin{align*}
            \liminf_{h\rightarrow 0_+}h^{1/3}\int_{\Omega}V_h\rho_h\geq  \frac{2}{3\pi}\sum_{k=1}\int_{\R}\left(\mu-\frac{\kappa_0}{2}s^2-z_k\right)^{3/2}_{+}-\left(\mu-\frac{\kappa_0}{2}s^2-\lambda_k(s;V)\right)^{3/2}_{+}\dd x,
        \end{align*}
        and applying it again after replacing $V$ by $-V$ we see that
        \begin{align*}
            \limsup_{h\rightarrow 0_+}h^{1/3}\int_{\Omega}V_h\rho_h\leq \frac{2}{3\pi}\sum_{k=1}\int_{\R}\left(\mu-\frac{\kappa_0}{2}s^2-\lambda_k(s;-V)\right)^{3/2}_{+}-\left(\mu-\frac{\kappa_0}{2}s^2-z_k\right)^{3/2}_{+}\dd x.
        \end{align*}
        
        Then, considering $\varepsilon V$ instead, it follows from the above that the $\liminf$ term is bounded from below by 
        \begin{align*}
            \limsup_{\varepsilon\rightarrow 0}L_{1,1}^{\mathrm{cl}}\sum_{k=1}\int_{\R}\frac{1}{\varepsilon}\left[\left(\mu-\frac{\kappa_0}{2}s^2-z_k\right)^{3/2}_{+}-\left(\mu-\frac{\kappa_0}{2}s^2-\lambda_k(s;\varepsilon V)\right)^{3/2}_{+}\right]\dd x.
        \end{align*}
        Given that the sum in $k$ is finite, we can use  
        \begin{equation*}
            \frac{\mathrm{d}}{\mathrm{d}\varepsilon}\left(\mu-\frac{\kappa_0}{2}s^2-\lambda_k(s;\varepsilon V)\right)^{3/2}_+\Big\vert_{\varepsilon=0}=\frac{3}{2}\left(\mu-\frac{\kappa_0}{2}s^2-z_k\right)^{1/2}_+ \frac{\mathrm{d}}{\mathrm{d}\varepsilon}\lambda_k(s;\varepsilon V)\Big\vert_{\varepsilon=0},
        \end{equation*}
        where from perturbation theory it follows that 
        \begin{equation*}
            \frac{\mathrm{d}}{\mathrm{d}\varepsilon}\lambda_k(s;\varepsilon V)\Big\vert_{\varepsilon=0}=\int_{0}^\infty V(s,t)\mathrm{a}_k(t)^2\dd t
        \end{equation*}
        uniformly in $s$, which follows from the boundedness and compact support of $V$. Performing the same calculation for the $\limsup$ yields
        \begin{align}\label{eqn:finaldensityasymp}
            \lim_{h\rightarrow 0_+}h^{1/3}\int_\Omega V_h \rho_h\dd x=\frac{1}{\pi}\sum_{k=1}\int_{\R^2_+}\left(\mu-\frac{\kappa_0}{2}s^2-z_k\right)^{1/2}_{+}\mathrm{a}_k(t)^2 V(s,t)\dd s\dd t.
        \end{align}
        Now we note that the integral on the the left-hand side can be written as
        \begin{align*}
            \int_{\Omega} V_h\rho_h
            &=\int_{\mathcal{W}_h}V_h(\tau(s,t))\rho_h(\tau(s,t))m(s,t)\dd s\dd t\\
            &=h\int_{-h^{-\eta}}^{h^{-\eta}}\int_0^{h^{-\eta}}V(s,t)\rho_h(\tau(h^{1/3}s,h^{2/3}t))(1-h^{2/3}t\kappa(h^{1/3}s))\dd s\dd t.
        \end{align*}
        Thus, using the boundedness of the curvature $\kappa$ and combining this with \eqref{eqn:finaldensityasymp} we conclude that 
        \begin{align*}
            \lim_{h\rightarrow 0_+}h^{4/3}\int_{\R^2_+}V(s,t)\widetilde{\rho_h}(s,t)\dd s\dd t=\frac{1}{\pi}\int_{\R^2_+}\sum_{k=1}\left(\mu-\frac{\kappa_0}{2}s^2-z_k\right)^{1/2}_{+}\mathrm{a}_k(t)^2 V(s,t)\dd s\dd t
        \end{align*}
        where $\widetilde{\rho_h}(s,t)\coloneqq \rho_h(h^{1/3}s,h^{2/3}t)$. Since this holds for any $V\in C_0^\infty(\R_+^2)$, we obtain the first statement in Theorem \ref{thm:maindensity}. The proof of the second part follows by the same argument using the rescaled potential \eqref{eqn:rescalalpha} and Proposition \ref{prop:precisealpha}. 
    \end{proof}

    \subsection*{Acknowledgements}
    This work was funded by the Deutsche Forschungsgemeinschaft (DFG) project TRR 352 – Project-ID 470903074. The author is grateful to Rupert L. Frank for his direction and insight.


\begin{thebibliography}{1}

\bibitem{cornean_two-dimensional_2022}
H.~Cornean, D.~Krej\v{c}i\v{r}\'{i}k, T.~G. Pedersen, N.~Raymond and E.~Stockmeyer,
  \emph{On the two-dimensional quantum confined {Stark} effect in strong
  electric fields}, SIAM J. Math. Anal. \textbf{54}(2),
  2114--2127 (2022).

\bibitem{fahs}
R.~Fahs, \emph{On the semi-classical analysis of Schrödinger operators with linear electric potentials on a bounded domain}, Asymptot. Anal. \textbf{135}(1--2), 88--113 (2024). 

\bibitem{frank_asymptotic_2007}
R.~L. Frank, \emph{On the asymptotic number of edge states for magnetic
  {Schrödinger} operators}, Proc. Lond. Math. Soc. 
  \textbf{95}(1), 1--19 (2007).

\bibitem{frank_weyls_2023}
R.~L. Frank, \emph{Weyl’s {Law} under {Minimal} {Assumptions}}, In M.~Brown et al. (eds.): From 
  {Complex} {Analysis} to {Operator} {Theory}: {A} {Panorama}, pp. 549--572, Springer (2023).

\bibitem{frank_bound_2018}
R.~L. Frank, A.~Laptev, \emph{Bound on the number of negative eigenvalues of two-dimensional Schrödinger operators on domains}, Algebra i Analiz \textbf{30}(3), 250-–272 (2018); reprinted in St. Petersburg Math. J. \textbf{30}(3), 573–-589 (2019).


\bibitem{frank_schrodinger_2022}
R.~L. Frank, A.~Laptev and T.~Weidl, Schrödinger {Operators}: {Eigenvalues}
  and {Lieb}–{Thirring} {Inequalities}, Cambridge University Press (2022).

\bibitem{laptev_calogero_2022}
A.~Laptev, L.~Read and L.~Schimmer, \emph{Calogero {Type} {Bounds} in {Two}
  {Dimensions}}, Arch. Ration. Mech. Anal. \textbf{245}(3),
  1491--1505 (2022).

\bibitem{reed_methods4}
M.~Reed, B.~Simon, \emph{Methods of Modern Mathematical Physics IV: Analysis of Operators}, Academic Press (1978).

\end{thebibliography}
\end{document}